\newtheorem{theorem}{Theorem}
\newtheorem{lemma}{Lemma}
\newtheorem{definition}{Definition}
\newcommand{\bigO}{\mathcal{O}}
\newcommand{\exactbigO}{\Theta}
\newcommand{\smallo}{o}
\newcommand{\integers}{\mathds{Z}}
\newcommand{\reals}{\mathds{R}}
\newcommand{\mF}{\mathcal{F}}
\newcommand{\mH}{\mathcal{H}}
\newcommand*{\eg}{\textit{e.g.}\@\xspace}
\newcommand*{\ie}{\textit{i.e.}\@\xspace}
\newcommand*{\aka}{\textit{a.k.a.}\@\xspace}
\newcommand{\sg}{\operatorname{SG}}
\newcommand{\mg}{\operatorname{MG}}
\newcommand{\csg}{\operatorname{CSG}}
\newcommand{\core}{\operatorname{Core}}
\newcommand{\mcore}{\operatorname{MCore}}
\newcommand{\LD}{\operatorname{LD}}
\newcommand{\ld}{\operatorname{ld}}
\newcommand{\mgsimple}{\mg^{\setminus \LD}}
\newcommand{\sgpos}{\sg^{>0}}
\newcommand{\mgpos}{\mg^{>0}}
\newcommand{\IE}{\operatorname{IE}}
\newcommand{\MV}{\mathit{MV}}
\author{\'Elie de Panafieu\thanks{Email: depanafieuelie[at]gmail.com. This work was partially founded by 
Sorbonne Universit\'es, UPMC Univ Paris 06, CNRS, LIP6 UMR 7606, France;
the Austrian Science Fund (FWF) grant F5004;
the Amadeus program and the PEPS HYDrATA.}\\
{\normalsize Bell Labs France, Nokia}}
\title{Counting connected graphs with large excess\footnote{A shorter version of this work has been presented as a talk and published in the proceedings of the 28th International Conference on Formal Power Series and Algebraic Combinatorics (FPSAC 2016).}}
\begin{document}
\maketitle

\begin{abstract}
We enumerate the connected graphs 
that contain a linear number of edges with respect to the number of vertices.
So far, only the first term of the asymptotics was known.
Using analytic combinatorics,
\textit{i.e.}\ generating function manipulations,
we derive the complete asymptotic expansion.

\noindent \textbf{keywords.} connected graphs, analytic combinatorics, generating functions, asymptotic expansion
\end{abstract}

    \section{Introduction}

We investigate the number~$\csg_{n,k}$ of connected graphs 
with~$n$ vertices and~$n+k$ edges.
The quantity~$k$, defined as the difference 
between the numbers of edges and vertices, 
is the \emph{excess} of the graph.

    \paragraph{Related works}

Trees are the simplest connected graphs,
and reach the minimal excess~$-1$.
They were enumerated in 1860 by Borchardt,
and his result, known as \emph{Cayley's Formula},
is~$\csg_{n,-1} = n^{n-2}$.
\cite{R59} then derived the formula
for~$\csg_{n,0}$, which corresponds to connected graphs 
that contain exactly one cycle,
and are called \emph{unicycles}.
\cite{W80},
using generating function techniques, 
obtained the asymptotics of connected graphs 
for~$k=\smallo(n^{1/3})$.
This result was improved by \cite{FSS04},
who derived a complete asymptotic expansion
for fixed excess.

\cite{L90} obtained the asymptotics of~$\csg_{n,k}$
when~$k$ goes to infinity while~$k = \smallo(n)$.
\cite{BCM90} derived the asymptotics for a larger range,
requiring only that~$2k/n - \log(n)$ is bounded.
This covers the interesting case where~$k$ is proportional to~$n$.
Their proof was based on differential equations obtained by Wright,
involving the generating functions of connected graphs indexed by their excesses.
Since then, two simpler proofs were proposed.
The proof of \cite{PW05} relied on the enumeration 
of graphs with minimum degree at least~$2$.
The second proof, derived by \cite{HS06},
used probabilistic methods, 
analyzing a breadth-first search on a random graph.

\cite{ER60} proved that almost all graphs are connected
when~$(2k/n - \log(n))$ tends to infinity.
As a corollary, the asymptotics of connected graphs
with those parameters is equivalent to 
the total number of graphs.

    \paragraph{Contributions}

In this article, 
we derive an exact expression 
for the generating function of connected graphs (Theorem~\ref{th:exact_csg}),
tractable for asymptotics analysis.
Our main result is the following theorem.

\begin{theorem} \label{th:asymptotics_csg}
When~$k/n$ has a positive limit and~$d$ is fixed, then the following asymptotics holds
\[
    \csg_{n,k} = D_{n,k} \left( 1 + c_1 n^{-1} + \cdots + c_{d-1} n^{-(d-1)} + \bigO(n^{-d}) \right),
\]
where the dominant term~$D_{n,k}$ is derived in Lemma~\ref{th:dominant_asymptotics},
and the~$(c_{\ell})$ are computable constants.
\end{theorem}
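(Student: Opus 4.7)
The plan is to apply the saddle-point method to the exact generating function expression furnished by Theorem~\ref{th:exact_csg}. Because $k/n$ tends to a positive limit $\alpha$, the relevant saddle point will lie strictly inside the domain of analyticity of the integrand and away from any singularity, so the method should produce a full asymptotic expansion of arbitrary order.

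Step by step, I would first use Cauchy's coefficient-extraction formula to express $\csg_{n,k}$ as a contour integral around the origin of the generating function of Theorem~\ref{th:exact_csg}. Recasting the integrand in the canonical form $h(z)\exp(n\,g_\alpha(z))$, where $g_\alpha$ depends on the parameter $\alpha = k/n$, the critical-point equation $g_\alpha'(z^*_\alpha)=0$ locates the saddle. Substituting back and evaluating the leading Gaussian integral recovers exactly the dominant term $D_{n,k}$ of Lemma~\ref{th:dominant_asymptotics}.

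To go beyond the leading term, I would Taylor-expand both $h$ and the non-quadratic portion of $g_\alpha$ around $z^*_\alpha$ up to order $2d$, substitute into the integrand, and integrate term by term against the remaining Gaussian factor. This produces an asymptotic series in inverse powers of $n$, whose $\ell$-th coefficient $c_\ell$ is an explicit polynomial in the derivatives $g_\alpha^{(j)}(z^*_\alpha)$ and $h^{(j)}(z^*_\alpha)$, hence computable. The truncation error after $d$ terms is $\bigO(n^{-d})$ by the standard remainder estimate of the saddle-point lemma.

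The main obstacle is uniformity. One must choose a contour on which $\Re(g_\alpha)$ attains a strict maximum at $z^*_\alpha$ and decays quickly away from it, and then show that the contribution from outside a shrinking neighborhood of $z^*_\alpha$ is exponentially negligible, uniformly along the sequence of pairs $(n,k)$ with $k/n \to \alpha$. A subtler related point is that the saddle $z^*_\alpha$, and consequently each $c_\ell$, depends smoothly on $k/n$, so the stated expansion is really a uniform expansion in the parameter, and this uniformity must be propagated through the tail bound. Once these uniform estimates are established, the theorem follows from the standard saddle-point machinery of analytic combinatorics.
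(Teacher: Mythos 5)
Your proposal captures only the final step of the argument---the saddle-point extraction that yields $D_{n,k}$ and the higher-order corrections by Taylor expansion at the saddle---but it skips what is actually the main technical content of the proof. Theorem~\ref{th:exact_csg} does not hand you a single integrand of the form $h(z)e^{n g_\alpha(z)}$: it expresses $\csg_{n,k}$ as a sum over $q$ and over all integer compositions $k_1+\cdots+k_q=k$, and the number of terms in that sum grows with $k$, hence with $n$. Before any saddle-point analysis can begin, one must prove that all but finitely many of these terms contribute only $\bigO(k^{-d}D_{n,k})$, so that the sum collapses to the compositions with $q\le d+4$ and a single dominant part $k_q=k-r$ with $r\le d+3$. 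That reduction rests on the coefficient-wise domination of $\sgpos_k$ by $\mgpos_k$ (Lemma~\ref{th:sgpos}), the bound of Lemma~\ref{th:bound_prod_sgpos}, and the estimates on the double-factorial sums $S_{q,d,k}$ developed in Appendix~\ref{sec:double_factorial_asymptotics}; it is where most of the work lies. A second truncation is also needed and absent from your plan: each $\sgpos_{k-r}(z)$ is itself a sum over patchwork excesses $\ell$, and cutting it at $\ell<d$ with controlled error requires the bounded inclusion-exclusion argument of Lemma~\ref{th:bound_ell}. Without these two reductions the ``standard saddle-point machinery'' has no well-defined finite object to act on.

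A secondary inaccuracy: the coefficient extraction is bivariate, of the form $[z^n x^{2k}]A(z,x)B(z,x)^k$, so the saddle is a pair $(\zeta,\lambda)$ determined by two equations and the Gaussian factor involves the determinant of a $2\times 2$ Hessian; the paper obtains the full expansion from \cite[Theorem 5.1.2]{PW13} rather than from a univariate remainder estimate. Your uniformity concerns are legitimate but constitute the routine part of the analysis here; the genuine difficulty you have not addressed is the reduction of an exact formula with unboundedly many terms to finitely many saddle-point-amenable ones.
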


    \section{Notations and models}

We introduce the notations adopted in this article,
the standard graph model, a multigraph model better suited for generating function manipulations,
and the concept of \emph{patchwork}, used to translate to graphs the results derived on multigraphs.

    \paragraph{Notations}

A \emph{multiset} is an unordered collection of objects,
where repetitions are allowed.
\emph{Sets}, or \emph{families}, are then multisets without repetitions.
A \emph{sequence}, or \emph{tuple}, is an ordered multiset.
We use the parenthesis notation~$(u_1, \ldots, u_n)$ for sequences, 
and the brace notation $\{u_1, \ldots, u_n\}$ for sets and multisets.
The cardinality of a set or multiset~$S$ is denoted by~$|S|$.
The double factorial notation for odd numbers stands for
\[
  (2k-1)!! = \frac{(2k)!}{2^k k!},
\]
and~$[z^n] F(z)$ denotes the~$n$th coefficient of the series expansion of~$F(z)$ at~$z=0$.

    \paragraph{Graphs}

We consider in this article the classic model of graphs, 
\aka \emph{simple graphs},
with labelled vertices and unlabelled unoriented edges.
All edges are distinct and no edge links a vertex to itself.
We naturally adopt for graphs generating functions
exponential with respect to the number of vertices,
and ordinary with respect to the number of edges
(see \cite{FS09}, or \cite{BLL97}).

\begin{definition}
A graph~$G$ is a pair~$(V(G), E(G))$,
where~$V(G)$ is the labelled set of vertices,
and $E(G)$ is the set of edges.
Each edge is a set of two vertices from~$V(G)$.
The number of vertices (resp.~of edges) is~$n(G) = |V(G)|$
(resp.~$m(G) = |E(G)|$).
The \emph{excess}~$k(G)$ is defined as~$m(G) - n(G)$.
The generating function of a family~$\mF$ of graphs is
\[
  F(z,w) =
  \sum_{G \in \mF}
  w^{m(G)}
  \frac{z^{n(G)}}{n(G)!},
\]
and~$F_k(z)$ denotes the generating function of multigraphs from~$\mF$ with excess~$k$,
\[
    F_k(z) = [y^k] F(z/y,y).
\]
\end{definition}

As always in analytic combinatorics and species theory,
the labels are distinct elements that belong to a totally ordered set.
When counting labelled objects (here, graphs), we always assume that 
the labels are consecutive integers starting at~$1$.
Another formulation is that we consider two objects as equivalent 
if there exists an increasing relabelling sending one to the other.

With those conventions, the generating function of all graphs is
\[
  \sg(z,w) = 
  \sum_{n \geq 0}
  (1+w)^{\binom{n}{2}}
  \frac{z^n}{n!},
\]
because a graph with~$n$ vertices
has~$\binom{n}{2}$ possible edges.
Since a graph is a set of connected graphs,
the generating function of connected graphs~$\csg(z,w)$
satisfies the relation
\[
  \sg(z,w) = e^{\csg(z,w)}.
\]
We obtain the classic closed form
for the generating function of connected graphs
\[
  \csg(z,w) =
  \log \bigg(
  \sum_{n \geq 0}
  (1+w)^{\binom{n}{2}}
  \frac{z^n}{n!}
  \bigg).
\]
This expression was the starting point of the analysis of \cite{FSS04},
who worked on graphs with fixed excess.
However, as already observed by those authors, it is complex to analyze,
because of ``\emph{magical}'' cancellations in the coefficients.
The reason of those cancellations is the presence of trees,
which are the only connected components with negative excess.
In this paper, we follow a different approach, closer to the one of \cite{PW05}:
we consider \emph{cores}, \ie graphs with minimum degree at least~$2$,
and add rooted trees to their vertices.
This setting produces all graphs without trees.

    \paragraph{Multigraphs}

As already observed by \cite{FKP89,JKLP93},
\emph{multigraphs} are better suited for generating function manipulations than graphs.
Exact and asymptotic results on connected multigraphs are available in \cite{ElieThesis}.
We propose a new definition for those objects, 
distinct but related with the one used by \cite{FKP89, JKLP93},
and link the generating functions of graphs 
and multigraphs in Lemma~\ref{th:multigraphs_to_graphs}.
We define a multigraph as a graph
with labelled vertices, and labelled oriented edges,
where loops and multiple edges are allowed.
Since vertices and edges are labelled, 
we choose exponential generating functions with respect to both quantities.
Furthermore, a weight~$1/2$ is assigned to each edge,
for a reason that will become clear in Lemma~\ref{th:multigraphs_to_graphs}.

\begin{definition}
A multigraph~$G$ is a pair~$(V(G), E(G))$,
where~$V(G)$ is the set of labelled vertices,
and $E(G)$ is the set of labelled edges 
(the edge labels are independent from the vertex labels).
Each edge is a triplet $(v,w,e)$,
where~$v$, $w$ are vertices,
and~$e$ is the label of the edge.
The number of vertices (resp.~number of edges, excess) is~$n(G) = |V(G)|$
(resp.~$m(G) = |E(G)|$, $k(G) = m(G) - n(G)$).
The generating function of the family~$\mF$ of multigraphs is
\[
  F(z,w) =
  \sum_{G \in \mF}
  \frac{w^{m(G)}}{2^{m(G)} m(G)!}
  \frac{z^{n(G)}}{n(G)!},
\]
and~$F_k(z)$ denotes the generating function~$[y^k] F(z/y,y)$.
\end{definition}

Figure~\ref{fig:patchwork} presents an example of multigraph.	
A major difference between graphs and multigraphs
is the possibility of loops and multiple edges.

\begin{definition}
A \emph{loop} (resp.\ \emph{double edge}) of a multigraph~$G$ is a subgraph~$(V, E)$ 
(\textit{i.e.}\ $V \subset V(G)$ and~$E \subset E(G)$)
isomorphic to the following left multigraph (resp.\ to one of the following right multigraphs).
\begin{center}
\includegraphics[scale=1.]{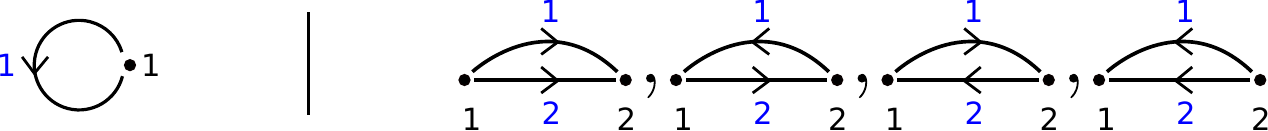}
\end{center}
The set of loops and double edges of a multigraph~$G$ is denoted by~$\LD(G)$,
and its cardinality by $\ld(G)$.% = |\LD(G)|$.
\end{definition}

In particular, a multigraph that has no double edge contains no multiple edge.
Multigraphs are better suited for generating function manipulations than graphs.
However, we aim at deriving results on the graph model,
since it has been adopted both by the graph theory and the combinatorics communities.
The following lemma, illustrated in Figure~\ref{fig:multigraphs_to_graphs}, 
links the generating functions of both models.

\begin{lemma} \label{th:multigraphs_to_graphs}
Let~$\mgsimple$ denote the family of multigraphs
that contain neither loops nor double edges,
and~$p$ the projection 
from~$\mgsimple$ to the set~$\sg$ of graphs,
that erases the edge labels and orientations,
as illustrated in Figure~\ref{fig:multigraphs_to_graphs}.
Let~$\mF$ denote a subfamily of~$\mgsimple$,
stable by edge relabelling and change of orientations.
Then there exists a family~$\mH$ of graphs such that $p^{-1}(\mH) = \mF$.
Furthermore, the generating functions of~$\mF$ and~$\mH$,
with the respective conventions of multigraphs and graphs,
are equal
\[
  \sum_{G \in \mF}
  \frac{w^{m(G)}}{2^{m(G)} m(G)!}
  \frac{z^{n(G)}}{n(G)!}
  =
  \sum_{G \in \mH}
  w^{m(G)}
  \frac{z^{n(G)}}{n(G)!}.
\]
\end{lemma}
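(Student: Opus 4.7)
The plan is to set $\mH := p(\mF)$ and then verify both the set-theoretic identity $p^{-1}(\mH) = \mF$ and the equality of generating functions by a fibered counting argument.

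First I would establish the set-theoretic part. The inclusion $\mF \subseteq p^{-1}(p(\mF)) = p^{-1}(\mH)$ is automatic. For the reverse, let $G \in p^{-1}(\mH)$, so that $p(G) = p(G')$ for some $G' \in \mF$. Here the restriction to $\mgsimple$ is crucial: because $G$ and $G'$ contain no loops and no double edges, sharing the same projection means they have identical vertex sets and identical underlying edge-sets (as sets of unordered vertex pairs), and therefore differ only by a permutation of the edge labels and, independently on each edge, a choice of orientation. The stability hypothesis on $\mF$ under edge relabelling and orientation change then forces $G \in \mF$.

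Second I would count the fibers. For a graph $H \in \mH$ with $n = n(H)$ vertices and $m = m(H)$ edges, the preimage $p^{-1}(H)$ contains exactly $2^m \cdot m!$ multigraphs: because $H$ has no multiple edges, its edges form an $m$-element set of vertex pairs, so one first picks a bijection with the label set $\{1,\ldots,m\}$ ($m!$ choices), and because $H$ has no loops, each edge has two genuinely distinct orientations ($2^m$ choices altogether). Partitioning the sum defining the multigraph generating function of $\mF$ by projection, and using this fiber count, gives
\[
  \sum_{G \in \mF} \frac{w^{m(G)}}{2^{m(G)} m(G)!} \frac{z^{n(G)}}{n(G)!}
  = \sum_{H \in \mH} 2^{m(H)} m(H)! \cdot \frac{w^{m(H)}}{2^{m(H)} m(H)!} \frac{z^{n(H)}}{n(H)!},
\]
which collapses to $\sum_{H \in \mH} w^{m(H)} z^{n(H)} / n(H)!$. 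The factor $1/(2^m m!)$ in the multigraph convention is precisely engineered so that each fiber over a simple graph contributes weight one.

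The only real obstacle is the bookkeeping in the first step: one must argue carefully that on $\mgsimple$ the two operations of edge relabelling and orientation flipping act freely and transitively on each fiber $p^{-1}(H)$, which relies entirely on the absence of loops (ruling out edges with only one orientation) and of double edges (ruling out ambiguity when reassigning labels to parallel edges). Once this is in place, both halves of the statement follow immediately.
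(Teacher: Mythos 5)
Your proof is correct and is precisely the argument the paper intends: the lemma is left unproved in the text, with only Figure~\ref{fig:multigraphs_to_graphs} illustrating the fiber count ($8 = 2^2\cdot 2!$ preimages of a graph with two edges), which is exactly your observation that the weight $1/(2^m m!)$ cancels the $2^m m!$ choices of labels and orientations available when loops and double edges are excluded. Your set-theoretic verification that $\mH := p(\mF)$ satisfies $p^{-1}(\mH)=\mF$ via the stability hypothesis is the right way to make the statement fully rigorous.
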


\begin{figure}[htbp]
  \begin{center}
    \includegraphics[scale=0.9]{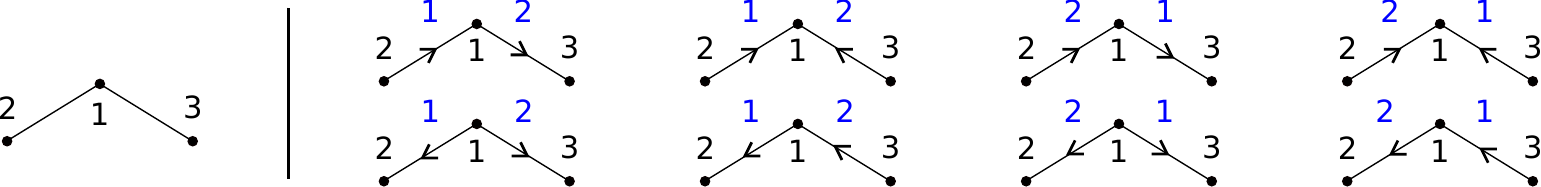}
    \caption{A graph~$G$ and the set~$\mF$ of multigraphs 
      sent by~$p$ (defined in Lemma~\ref{th:multigraphs_to_graphs}) to the graph~$G$.
      The generating function of~$\{G\}$ (resp.\ $\mF$) is $w^2 \frac{z^3}{3!}$ (resp.\ $8 \frac{w^2}{2^2 2!} \frac{z^3}{3!}$).
      As stated by Lemma~\ref{th:multigraphs_to_graphs}, those generating functions are equal.}
    \label{fig:multigraphs_to_graphs}
  \end{center}
\end{figure}

    \paragraph{Patchworks}

To apply the previous lemma, we need to remove
the loops and multiple edges from multigraph families.
Our tool is the inclusion-exclusion technique,
in conjunction with the notion of \emph{patchwork}.

\begin{definition} \label{def:patchworks}
A \emph{patchwork} with~$p$ parts
$
  P = \{(V_1, E_1), \ldots, (V_p, E_p)\}
$
is a set of~$p$ pairs $(\text{vertices}, \text{edges})$ such that
\[
  \mg(P) = \left( \cup_{i=1}^p V_i, \cup_{i=1}^p E_i \right)
\]
is a multigraph, and each~$(V_i, E_i)$ 
is either a loop or a double edge of~$\mg(P)$, \textit{i.e.}\ $P \subset \LD(\mg(P))$.
The number of parts of the patchwork is~$|P|$.
Its number of vertices~$n(P)$, edges~$m(P)$, and its excess~$k(P)$
are the corresponding numbers for~$\mg(P)$.
See Figure~\ref{fig:patchwork}.
\end{definition}

\begin{figure}[htbp]
  \begin{center}
    \includegraphics[scale=1]{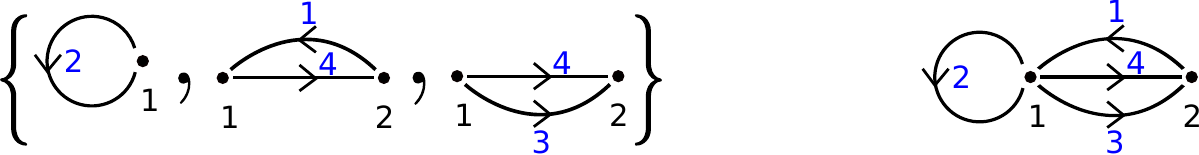}
    \caption{A patchwork~$P$ of excess~$2$, and the multigraph~$\mg(P)$. 
    Observe that several patchworks can lead to the same multigraph.
    Here, $\LD(\mg(P)) \neq P$,
    since the double edge~$(\{1,2\}, \{(2,1,1),(1,2,3)\})$ is missing from~$P$.}
    \label{fig:patchwork}
  \end{center}
\end{figure}

In particular, all pairs~$(V_i, E_i)$ are distinct,
$\mg(P)$ has minimum degree at least~$2$,
and two edges in~$E_i$, $E_j$ having the same label must link the same vertices.
We use for patchwork generating functions
the same conventions as for multigraphs
introducing an additional variable~$u$ to mark the number of parts
\[
    P(z,w,u) = \sum_{\text{patchwork~$P$}} u^{|P|} \frac{w^{m(P)}}{2^{m(P)} m(P)!} \frac{z^{n(P)}}{n(P)!}.
\]

\begin{lemma} \label{th:patchworks}
The generating function of patchworks is equal to
\[
    P(z,w,u) = \sum_{k \geq 0} P_k(zw,u) w^k, \qquad \text{where} \quad P_0(z,u) = e^{u \frac{z}{2} + u \frac{z^2}{4}}.
\]
For each~$k$, there is a polynomial~$P_k^{\star}(z,u)$ such that~$P_k(z,u) = P_0(z,u) P_k^{\star}(z,u)$.
\end{lemma}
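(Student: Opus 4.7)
The first identity is pure rewriting: each summand $u^{|P|} w^{m(P)} z^{n(P)}/(2^{m(P)} m(P)! n(P)!)$ in $P(z,w,u)$ factors as $u^{|P|} (zw)^{n(P)}/(2^{m(P)} m(P)! n(P)!) \cdot w^{k(P)}$, and grouping patchworks by excess gives $P(z,w,u) = \sum_k P_k(zw, u) w^k$.

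For the remaining two claims, my plan is the standard exponential decomposition along the connected components of $\mg(P)$. Each part $(V_i, E_i)$ is itself a connected subgraph (a loop or a pair of parallel edges), so grouping parts by the component of $\mg(P)$ they touch exhibits a patchwork as a SET, in the species sense, of \emph{connected} patchworks. With the multigraph conventions, this translates into $P(z,w,u) = \exp(C(z,w,u))$ where $C$ is the generating function of connected patchworks. Minimum degree at least $2$ forces every connected multigraph to satisfy $m \geq n$, so each connected patchwork has excess $\geq 0$; split $C = C_0 + C_+$ accordingly and write $P = \exp(C_0) \cdot \exp(C_+)$. Applying the substitution $(z,w)\mapsto (z/y,y)$ and reading off the coefficient of $y^k$ then gives $P_k(z,u) = P_0(z,u)\,R_k(z,u)$ where $R = \exp(C_+)$, and I set $P_k^{\star}(z,u) := R_k(z,u)$.

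To identify $P_0$, I enumerate excess-$0$ connected patchworks. Such a patchwork has $m=n$ with minimum degree $2$, so every vertex has degree exactly $2$ and $\mg(P)$ is a cycle; the constraint that every vertex and edge be contained in some part restricts the cycle length to $1$ (a loop) or $2$ (a double edge), since no edge of a longer cycle lies in $\LD(\mg(P))$. A loop contributes $u\cdot wz/2$ to $C_0$, and a double edge contributes $u\cdot (wz)^2/4$ (the factor $4$ counts the two orientations of each of the two labeled edges). Hence $C_0 = u\,wz/2 + u\,(wz)^2/4$ and $\exp(C_0) = P_0(zw,u)$ with $P_0(z,u) = \exp(uz/2 + uz^2/4)$, as claimed.

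The main obstacle is the polynomiality of $P_k^{\star}$. Expanding $\exp(C_+)$ after the $(z,w)\mapsto(z/y,y)$ substitution writes $R_k$ as a polynomial expression in $C_{+,1},\ldots,C_{+,k}$, so it suffices to prove that each $C_{+,k}$ is itself a polynomial in $z,u$, i.e.\ that only finitely many connected patchworks of excess $k$ exist. To do so I will bound $n(P)$. Split the vertices into the set $A$ of those of degree $\geq 3$ and the set $B$ of those of degree $2$. From $2m = 2n + 2k$ we get $\sum_{v\in A}(d(v)-2)=2k$, hence $|A|\leq 2k$ and $\sum_{A} d(v)\leq 6k$. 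A degree-$2$ vertex cannot carry a loop in a nontrivial component (its unique part would then be an isolated loop, forcing the component to be reduced to it and have excess $0$), so each $v\in B$ must have its two edges parallel to some neighbor $w$; if $w\in B$ the component reduces to this double edge, again of excess $0$, so $w\in A$, and each $w\in A$ accommodates at most $d(w)/2$ such pendants. This gives $|B|\leq 3k$ and $n(P)\leq 5k$, which in turn bounds $m(P)\leq 6k$ and $|P|$, proving that $C_{+,k}$ and hence $P_k^{\star}$ are polynomials in $z$ and $u$.
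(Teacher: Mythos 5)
Your proof is correct and follows essentially the same route as the paper: both factor $P_k$ as $P_0$ times the generating function of the patchworks whose components all have positive excess (the paper's ``no isolated loop or double edge'' condition is equivalent to your ``every component of $\mg(P)$ has positive excess''), and both reduce the polynomiality of the cofactor to a finiteness count. The only divergence is in that count: the paper strips the double edges at degree-$2$ vertices and invokes the finiteness of multigraphs with minimum degree~$3$ and bounded excess (Appendix~\ref{sec:cubic_multigraphs}), whereas you bound $n(P) \leq 5k$ directly by the same degree double-counting; both are valid.
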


\begin{proof}
A patchwork of excess~$0$ is a set of isolated loops and double edges
(\textit{i.e.}\ sharing no vertex with another loop or double edge),
which explains the expression of~$P_0(z,u)$.
Let~$P_k^{\star}$ denote the family of patchworks of excess~$k$
that contain no isolated loop or double edge.
Each vertex of degree~$2$ then belongs to exactly one double edge and no loop.
The number of such double edges is at most~$k$,
because each increases the excess by~$1$.
If we remove them, the corresponding multigraph has minimum degree at least~$3$ and excess at most~$k$.
There is a finite number of such multigraphs
(see \textit{e.g.}\ \cite{W80}, and we give the proof in Appendix~\ref{sec:cubic_multigraphs} for completeness),
so the family~$P_k^{\star}$ is finite,
and~$P_k^{\star}(z,u)$ is a polynomial.
Since any patchwork of excess~$k$ is a set of isolated loops 
and double edges and a patchwork from~$P_k^{\star}$, we have
\[
    P_k(z,u) = P_0(z,u) P_k^{\star}(z,u).
\]
\end{proof}

    \section{Exact enumeration}

In this section, we derive an exact expression for~$\csg_{k}(z)$, suitable for asymptotics analysis.
The proofs rely on tools developed by \cite{EdPR16, EdPCGGR16}.

\begin{theorem} \label{th:core}
The generating function of \emph{cores}, \ie graphs with minimum degree at least~$2$, is
\[
    \core(z,w) = \sum_{m \geq 0} (2m)! [x^{2m}] P(z e^x,w,-1) e^{z (e^x-1-x)} \frac{w^m}{2^m m!}.
\]
\end{theorem}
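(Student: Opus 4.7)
The plan is to read the right-hand side as a half-edge (configuration-model) construction, combined with an inclusion--exclusion over patchworks.

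First, I would use Lemma~\ref{th:multigraphs_to_graphs} to identify $\core(z,w)$ with the generating function of the subfamily $\mF\subseteq\mgsimple$ of multigraphs with minimum degree at least $2$ (and no loops or double edges): $\mF$ is stable under edge relabelling and reorientation, and its image under $p$ is exactly the family of cores. I would then lift the constraint ``no loops, no double edges'' by inclusion--exclusion, using the identity $\sum_{S\subseteq\LD(G)}(-1)^{|S|}=1$ if $\LD(G)=\emptyset$ and $0$ otherwise, and swapping the order of summation. A subset $S\subseteq\LD(G)$ of an ambient min-degree-$2$ multigraph is canonically a patchwork in the sense of Definition~\ref{def:patchworks}; conversely, every patchwork $P$ sits inside precisely those min-degree-$2$ multigraphs that extend $\mg(P)$. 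The sign $(-1)^{|S|}$ is produced by evaluating the patchwork generating function at $u=-1$. This reorganises $\core(z,w)$ as a sum, indexed by patchworks, of weighted generating functions of multigraph extensions.

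Next, I would compute the extension generating function via the half-edge model. A min-degree-$2$ extension of $P$ decomposes as: the pair $(V(P),E(P))$ itself; for each patchwork vertex, a possibly empty labelled set of ``free'' half-edges (no degree constraint, since the vertex already has degree at least $2$ in $\mg(P)$); a labelled set of fresh vertices, each carrying at least two free half-edges; and a perfect matching of all free half-edges, giving $m$ new oriented labelled edges. With an auxiliary variable $x$ marking free half-edges, the free half-edges at a single patchwork vertex contribute the exponential generating function $e^x$, which amounts to the substitution $z\mapsto ze^x$ in $P(z,w,-1)$; the labelled set of fresh vertices, each with the EGF $e^x-1-x$ of half-edges, contributes $e^{z(e^x-1-x)}$; and the matching of $2m$ labelled half-edges into $m$ oriented labelled edges of weight $w^m/(2^m m!)$ contributes the outer factor $(2m)!\,w^m/(2^m m!)$ together with the extraction $[x^{2m}]$. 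Summing over $m$ recovers the claimed closed form.

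The main obstacle is label bookkeeping. One must check that merging the patchwork (weighted by $w^{m_P}/(2^{m_P}m_P!)$) with the extension (weighted by $w^m/(2^m m!)$) recovers the single-multigraph weight $w^{m_P+m}/(2^{m_P+m}(m_P+m)!)$, the missing multinomial $\binom{m_P+m}{m_P}$ being absorbed by the exponential convention on edges; and that the substitution $z\mapsto ze^x$ together with the factor $e^{z(e^x-1-x)}$ correctly enumerates the labelled disjoint unions of patchwork vertices and fresh vertices. Granting these compatibility checks, the generating-function identity follows.
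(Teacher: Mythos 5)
Your proposal is correct and follows essentially the same route as the paper: inclusion--exclusion over the loops and double edges, realised by the patchwork generating function at $u=-1$, combined with the half-edge decomposition in which patchwork vertices contribute $e^x$ (hence the substitution $z\mapsto ze^x$), fresh vertices contribute $e^x-1-x$, and the pairing of the $2m$ labelled half-edges yields the factor $(2m)!\,w^m/(2^m m!)$. The label-bookkeeping points you flag are exactly what the paper discharges by invoking the symbolic method/species theory, so there is no gap.
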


\begin{proof}
Let~$\mcore$ denote the set of multicores,
\textit{i.e.}\ multigraphs with minimum degree at least~$2$,
and set
\[
    \mcore(z,w,u) = \sum_{\text{multicore~$G$}} u^{\ld(G)} \frac{w^{m(G)}}{2^{m(G)} m(G)!} \frac{z^{n(G)}}{n(G)!},
\]
where~$\ld(G)$ denotes the number of loops and double edges in~$G$.
According to Lemma~\ref{th:multigraphs_to_graphs}, we have $\core(z,w) = \mcore(z,w,0)$.
To express the generating function of multicores, 
the inclusion-exclusion method (see \cite[Section III.7.4]{FS09})
advises us to consider~$\mcore(z,w,u+1)$ instead.
This is the generating function of the set~$\mcore^{\star}$ of multicores
where each loop and double edge is either marked by~$u$ or left unmarked.
The set of marked loops and double edges form, by definition, a patchwork.
One can cut each unmarked edge into two labelled half-edges.
Observe that the degree constraint implies that each vertex outside the patchwork contains at least two half-edges.
Reversely, as illustrated in Figure~\ref{fig:multicores}, 
any multicore from~$\mcore^{\star}$ can be uniquely build following the steps:
\begin{enumerate}
\item
start with a patchwork~$P$, which will be the final set of marked loops and double edges,
\item
add a set of isolated vertices,
\item
add to each vertex a set of labelled half-edges,
such that each isolated vertex receives at least two of them.
The total number of half-edges must be even, and is denoted by~$2m$,
\item
add to the patchwork the~$m$ edges obtained by
linking the half-edges with consecutive labels ($1$ with $2$, $3$ with $4$ and so on).
\end{enumerate}
\begin{figure}[htbp]
  \begin{center}
    \includegraphics[scale=0.9]{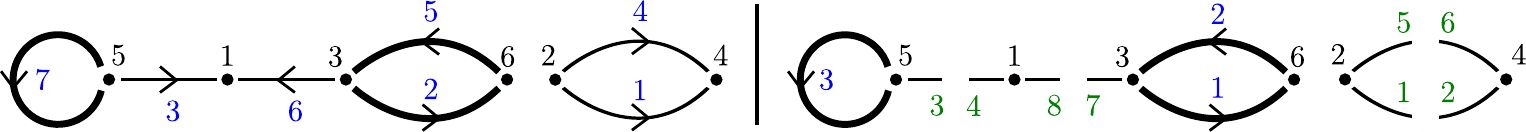}
    \caption{Left, a multigraph from~$\mcore^{\star}$ (the marked loops and double edges are bold).
        Right, the corresponding multigraph with labelled half-edges, build in step~$3$ of the proof of Theorem~\ref{th:core}.}
    \label{fig:multicores}
  \end{center}
\end{figure}
Observe that a relabelling of the vertices (resp.\ the edges) occurs at step~$2$ (resp.\ $4$).
This construction implies, by application of the species theory (\cite{BLL97}) or the symbolic method (\cite{FS09}),
the generating function relation
\[
    \mcore(z,w,u+1) = \sum_{m \geq 0} (2m)! [x^{2m}] P(z e^x,w,u) e^{z (e^x-1-x)} \frac{w^m}{2^m m!}.
\]
For~$u=-1$, we obtain the expression of~$\core(z,w) = \mcore(z,w,0)$.
\end{proof}

Any graph where no component is a tree can be built starting with a core,
and replacing each vertex with a rooted tree.
The components of smallest excess, zero, are then the unicycles.
The difference with the \emph{multi-unicycles} -- connected multigraphs of excess~$0$ --
is that the cycle can then be a loop or a double edge.
We recall the classic expressions of their generating functions (see \cite{FS09}).

\begin{lemma} \label{th:trees_and_unicycles}
The generating functions of rooted trees, multi-unicycles, and unicycles
are characterized by% the relations
\[
    T(z) = z e^{T(z)}, \qquad 
    \MV(z) = \frac{1}{2} \log \Big( \frac{1}{1-T(z)} \Big), \qquad 
    V(z) = \MV(z) - \frac{1}{2} T(z) - \frac{1}{4} T(z)^2.
\]
\end{lemma}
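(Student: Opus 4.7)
The plan is to apply the symbolic method to each of the three families and combine the contributions. The first identity is the most familiar: a rooted tree is a root vertex together with an (unordered) set of rooted subtrees hanging from it, so the SET construction immediately gives $T(z) = z \cdot e^{T(z)}$.

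For $\MV(z)$, I would decompose a multi-unicycle according to its unique cycle. In the multigraph model this cycle can take one of three shapes: a loop of length $1$, a double edge of length $2$, or a proper cycle of length $k \geq 3$, and in every case each cycle vertex carries a pendant rooted tree rooted at it. With the multigraph weight convention $1/(2^m m!)$ per $m$ edges, the loop contributes $\tfrac{1}{2} T(z)$, the double edge contributes $\tfrac{1}{4} T(z)^2$, and a cycle of length $k \geq 3$ contributes $T(z)^k/(2k)$, which is the classical generating function of unoriented labelled cycles with trees grafted at each vertex. Summing yields
\[
  \MV(z) \;=\; \sum_{k \geq 1} \frac{T(z)^k}{2k} \;=\; \frac{1}{2}\log\frac{1}{1-T(z)}.
\]

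The formula for $V(z)$ then follows from Lemma~\ref{th:multigraphs_to_graphs}: the sub-family of multi-unicycles whose cycle has length at least $3$ is stable under edge relabellings and orientation changes and contains no loop or double edge, so its multigraph generating function coincides with the graph generating function of unicycles. Removing the $k=1$ and $k=2$ terms from $\MV(z)$ therefore produces $V(z) = \MV(z) - \tfrac{1}{2}T(z) - \tfrac{1}{4}T(z)^2$.

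The only slightly delicate step is checking the $\tfrac{1}{2}$ and $\tfrac{1}{4}$ prefactors of the loop and double-edge contributions: the interplay between the edge weight $1/(2^m m!)$, the two possible orientations of each edge, and the automorphism of the double edge has to be tracked carefully. Once those two coefficients are confirmed, the rest is a routine summation of the logarithm series.
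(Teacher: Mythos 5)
Your derivation is correct and is exactly the standard symbolic-method argument behind these classical formulas; the paper itself offers no proof of this lemma, simply citing it as classical. All three weight checks work out as you claim (a loop contributes $\tfrac{1}{2}T$, a double edge $\tfrac{(2!)(2^2)}{2^2\,2!}\cdot\tfrac{T^2}{2!}=\tfrac{1}{4}T^2$, and a length-$k$ cycle $\tfrac{(k-1)!}{2}\cdot\tfrac{T^k}{k!}=\tfrac{T^k}{2k}$), and the passage from $\MV$ to $V$ via Lemma~\ref{th:multigraphs_to_graphs} is precisely the intended use of that lemma.
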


We apply the previous results to investigate graphs where all components have positive excess,
\ie that contain neither trees nor unicycles. 
This is the key new ingredient in our proof of Theorem~\ref{th:asymptotics_csg}.

\begin{lemma} \label{th:sgpos}
The generating function of graphs with excess~$k$ where each component has positive excess is
\[
    \sgpos_k(z) = 
    \sum_{\ell = 0}^k 
    (2(k-\ell)-1)!! 
    [x^{2(k-\ell)}]
    \frac{P_{\ell}(T(z) e^x,-1) e^{-V(z)}}
        {\big( 1 - T(z) \frac{e^x-1-x}{x^2/2} \big)^{k-\ell+1/2}}.
\]
It is coefficient-wise smaller than
\[
    \mgpos_k(z) =
    (2k-1)!! [x^{2k}] \frac{e^{-\MV(z)}}{\big( 1 - T(z) \frac{e^x-1-x}{x^2/2} \big)^{k+1/2}}.
\]
\end{lemma}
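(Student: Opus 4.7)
The plan is to follow the 2-core decomposition announced in the paragraph preceding the lemma: a graph with all components of positive excess is built uniquely from a core (in the sense of Theorem~\ref{th:core}) all of whose components have excess at least one, by attaching a rooted tree to each core vertex. Write~$\core^{>0}$ for this subfamily of cores. In excess-indexed generating functions, attaching a rooted tree at every core vertex corresponds to the substitution~$z \mapsto T(z)$, since identifying the root of a rooted tree with an existing vertex adds the same number of vertices and edges and so preserves the excess. Thus~$\sgpos_k(z) = \core^{>0}_k(T(z))$.

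To peel off the cyclic components, I would decompose~$\core$ as~$\core^{>0}$ times a set of simple cycles. The generating function of simple cycles of length at least three, before the tree substitution, is~$-\tfrac12 \log(1-z) - \tfrac{z}{2} - \tfrac{z^2}{4}$; after substituting~$z \mapsto T(z)$, Lemma~\ref{th:trees_and_unicycles} identifies this with~$V(z)$. Consequently~$\sgpos_k(z) = \core_k(T(z)) \cdot e^{-V(z)}$. The same argument in the multigraph setting, with simple multi-cycles (loops, double edges and cycles of length~$\geq 3$) yielding~$\MV(z)$ after tree substitution, gives~$\mgpos_k(z) = \mcore_k(T(z)) \cdot e^{-\MV(z)}$.

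It remains to evaluate~$\core_k(z)$. Starting from Theorem~\ref{th:core}, I substitute~$(z,w) \mapsto (z/y, y)$, apply Lemma~\ref{th:patchworks} to write~$P((z/y) e^x, y, -1) = \sum_{\ell \geq 0} P_\ell(z e^x, -1) y^\ell$, expand~$e^{(z/y)(e^x - 1 - x)}$ as a series in~$1/y$, and read off~$[y^k]$, which leads to a double sum indexed by the patchwork excess~$\ell$ and by the exponent~$j = m + \ell - k$ of~$(e^x - 1 - x)$. Setting~$\phi(x) = (e^x - 1 - x)/(x^2/2)$, the factor~$(e^x - 1 - x)^j = (x^2/2)^j \phi(x)^j$ has valuation~$2j$, so the extraction~$[x^{2m}](\cdots)(e^x - 1 - x)^j$ vanishes unless~$m \geq j$; equivalently~$\ell \leq k$, automatically truncating the outer sum. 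The inner sum over~$j$ then closes via the Pochhammer identity
\[
  \sum_{j \geq 0} (2(K+j) - 1)!! \, \frac{u^j}{j!} = (2K - 1)!! \, (1 - 2u)^{-K - 1/2},
\]
applied with~$K = k - \ell$ and~$u = z \phi(x)/2$, producing the factor~$(2(k-\ell)-1)!!$ and the exponent~$k - \ell + 1/2$. Substituting~$z \mapsto T(z)$ and multiplying by~$e^{-V(z)}$ gives the first formula. The multigraph analogue arises by the same computation, but with~$P(z, w, 0) = 1$ in place of~$P(z, w, -1)$ (the specialization of Lemma~\ref{th:patchworks} corresponding to untracked loops and double edges) and with~$e^{-V}$ replaced by~$e^{-\MV}$, collapsing the outer sum to the single term~$\ell = 0$.

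For the coefficient-wise inequality, I would invoke Lemma~\ref{th:multigraphs_to_graphs}: the family~$\sgpos$ is in bijection with the loopless, double-edge-free subfamily of~$\mgpos$, and their generating functions coincide under the respective conventions. Since this subfamily is contained in~$\mgpos$ and the multigraph generating function has non-negative coefficients, the inequality~$\sgpos_k(z) \leq \mgpos_k(z)$ coefficient-wise is immediate. The main delicate step in the whole argument is the algebraic manipulation of the double series — in particular, noticing that the truncation~$\ell \leq k$ is forced by the valuation of~$(e^x - 1 - x)^j$, and aligning the Pochhammer identity so that the prefactor~$(2(k-\ell)-1)!!$ and the exponent~$k-\ell+1/2$ come out exactly as stated.
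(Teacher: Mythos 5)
Your proof is correct and follows essentially the same route as the paper: the core-plus-rooted-trees decomposition with the unicycle factor $e^{V(z)}$ (resp.\ $e^{\MV(z)}$) divided out, the expansion of Theorem~\ref{th:core} via Lemma~\ref{th:patchworks} together with the identity $\sum_{j\ge 0}(2(K+j)-1)!!\,u^j/j! = (2K-1)!!\,(1-2u)^{-K-1/2}$, and Lemma~\ref{th:multigraphs_to_graphs} for the coefficient-wise domination. The only difference is the order of the algebraic expansions (you expand $P$ before closing the exponential sum, the paper does the reverse), which is immaterial.
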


\begin{proof}
In the expression of the generating function of cores,
after developing the exponential as a sum over~$n$
and applying the change of variable~$m \leftarrow k+n$, we obtain
\[
    \core(z,w) =
    \sum_{k \geq 0}
    [x^{2k}]
    P(z e^x,w,-1) 
    \Bigg(
    \sum_{n \geq 0}
    \frac{(2(k+n))!}{2^{k+n} (k+n)!}
    \frac{\left( z w \frac{e^x-1-x}{x^2} \right)^n}{n!}
    \Bigg)
    w^k.
\]
The sum over~$n$ is replaced by its closed form
\[
    \core(z,w) =
    \sum_{k \geq 0}
    [x^{2k}]
    P(z e^x,w,-1)
    \frac{(2k-1)!!}
    {\left( 1 - z w \frac{e^x-1-x}{x^2/2} \right)^{k + 1/2}}
    w^k.
\]
Lemma~\ref{th:patchworks} is applied to expand~$P(z e^x,w,-1)$.
The generating function of cores of excess~$k$ is then
\[
    \core_k(z) = [y^k] \core(z/y,y) =
    \sum_{\ell = 0}^k 
    (2(k-\ell)-1)!! 
    [x^{2(k-\ell)}]
    \frac{P_{\ell}(z e^x,-1)}{\big( 1 - z \frac{e^x-1-x}{x^2/2} \big)^{k-\ell+1/2}}.
\]
If we do not remove the loops and double edges,
we obtain the generating function~$\mcore_k(z)$ of multicores of excess~$k$.
In the generating function, this means replacing~$P(z e^x,w,-1)$ with the constant~$1$,
so~$P_{\ell}$ vanishes except for~$\ell=0$, and
\[
    \mcore_k(z) = (2k-1)!! [x^{2k}] \frac{1}{\big( 1 - z \frac{e^x-1-x}{x^2/2} \big)^{k+1/2}}.
\]
A core of excess~$k$ where the vertices are replaced by rooted trees
can be uniquely decomposed as a set of unicycles, 
and a graph of excess~$k$ where each component has a positive excess, so
\[
    \core_k(T(z)) = e^{V(z)} \sgpos_k(z), 
    \qquad \mcore_k(T(z)) = e^{\MV(z)} \mgpos_k(z).
\]
This leads to the results stated in the lemma, after division by~$e^{V(z)}$ (resp.~$e^{\MV(z)}$).
According to Lemma~\ref{th:multigraphs_to_graphs}, 
the generating function $\mgpos(z,w)$ of multigraphs 
where all components have positive excess 
dominates coefficient-wise~$\sgpos(z,w)$,
so~$\mgpos_k(z) = [y^k] \mgpos(z/y,y)$ dominates coefficient-wise $\sg_k(z)$.
\end{proof}

Either by calculus -- as a corollary of the previous lemma -- or by a combinatorial argument,
we obtain the following result, first proven by Wright (see also \cite[Lemma~1 p.33]{JKLP93}), 
and that was a key ingredient of the proofs of \cite{BCM90, FSS04}.

\begin{lemma} \label{th:poly_sgpos}
For each~$k>0$, there exists a computable polynomial~$Q_k$ such that
\[
    \sgpos_k(z) = \frac{Q_k(T(z))}{(1-T(z))^{3k}}.
\]
\end{lemma}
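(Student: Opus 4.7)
The plan is to substitute the formulas from Lemmas~\ref{th:patchworks} and~\ref{th:trees_and_unicycles} into the expression for $\sgpos_k(z)$ given by Lemma~\ref{th:sgpos}, and then track the powers of $1 - T(z)$ carefully in each summand. Writing $t = T(z)$ for brevity, I would first reorganize the numerator: using the factorization $P_\ell = P_0 P_\ell^\star$ with $P_0(te^x,-1) = \exp(-te^x/2 - t^2 e^{2x}/4)$, together with the identities $V = \MV - t/2 - t^2/4$ and $e^{-\MV(z)} = \sqrt{1-t}$, I expect the factor $P_\ell(te^x,-1)\,e^{-V(z)}$ to collapse to $\sqrt{1-t}$ multiplied by a power series $A_\ell(t,x)$ whose Taylor coefficients in $x$ are polynomials in $t$.

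Next I would expose the singularity in the denominator. Setting $\phi(x) = (e^x-1-x)/(x^2/2)$ and $\psi(x) = \phi(x) - 1 = O(x)$ and introducing $u = t/(1-t)$, one factors
\[
\bigl(1 - t\phi(x)\bigr)^{-(k-\ell+1/2)} = (1-t)^{-(k-\ell+1/2)}\,\bigl(1 - u\psi(x)\bigr)^{-(k-\ell+1/2)}.
\]
Since $\psi(x) = O(x)$, the Taylor coefficient of $x^{2(k-\ell)}$ in the second factor is a polynomial in $u$ of degree at most $2(k-\ell)$. Multiplying by $A_\ell(t,x)$ and extracting $[x^{2(k-\ell)}]$ still yields a polynomial in $t$ and $u$ of bounded degree in $u$. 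Substituting $u = t/(1-t)$ turns this expression into $R_\ell(t)/(1-t)^{2(k-\ell)}$ for some polynomial $R_\ell$.

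Reassembling, the $\ell$-th summand of $\sgpos_k(z)$ becomes
\[
\frac{(2(k-\ell)-1)!!\,R_\ell(t)}{(1-t)^{3(k-\ell)}},
\]
because the $\sqrt{1-t}$ from the numerator and the half-integer exponent $-(k-\ell+1/2)$ from the denominator combine to the integer exponent $-(k-\ell)$, which adds to the $-2(k-\ell)$ coming from the coefficient extraction. The largest denominator $(1-t)^{3k}$ is attained at $\ell = 0$, so placing the finite sum over this common denominator produces the desired form $Q_k(T(z))/(1-T(z))^{3k}$ with $Q_k$ a polynomial. The only delicate step I anticipate is checking this exact cancellation of the two half-integer exponents; once that is secured, the rest is organized bookkeeping of power-series expansions.
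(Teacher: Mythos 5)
Your proposal is correct and implements exactly the route the paper itself invokes for this statement, namely obtaining it ``by calculus, as a corollary'' of Lemma~\ref{th:sgpos} (the paper gives no further detail, deferring to Wright and to \citet{JKLP93}). The one step you flag as delicate does go through: since $e^{-\MV(z)}=\sqrt{1-T(z)}$, the factor $P_{\ell}(T(z)e^x,-1)e^{-V(z)}$ indeed contributes $\sqrt{1-T(z)}$ times a series with polynomial coefficients, and this cancels the half-integer exponent $k-\ell+\tfrac12$ to leave the integer power $3(k-\ell)$, with the maximum $3k$ attained at $\ell=0$.
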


Observe that this result is only useful for fixed~$k$.
We finally prove an exact expression for the number of connected graphs,
which asymptotics is derived in Section~\ref{sec:asymptotics}.

\begin{theorem} \label{th:exact_csg}
For~$k>0$, the number of connected graphs with~$n$ vertices and excess~$k$ is %generating function of connected graphs with excess~$k$ is
\[
    \csg_{n,k} = n! [z^n]\csg_k(z) = 
    \sum_{q=1}^k
    \frac{(-1)^{q+1}}{q}
    \sum_{\substack{k_1 + \cdots + k_q = k\\ \forall j,\ 1 \leq k_j \leq k-q+1}}
    n! [z^n] \prod_{j=1}^q \sgpos_{k_j}(z).
\]
\end{theorem}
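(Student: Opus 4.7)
The plan is to apply the exponential formula (set-of-components decomposition) to the family $\sgpos$ and then extract coefficients via the Taylor expansion of the logarithm. The key observation is that the ``no tree, no unicycle'' restriction in $\sgpos$ is harmless for $k>0$, because a connected graph of excess $k\geq 1$ is precisely a connected component of positive excess.

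First I would introduce $\csgpos(z,w)$, the generating function of connected graphs with positive excess. Since an element of $\sgpos$ is, by definition, a set of connected graphs with positive excess, the standard exponential formula for labelled structures gives $\sgpos(z,w) = \exp(\csgpos(z,w))$, hence
\[
    \csgpos(z,w) = \log \sgpos(z,w).
\]
Then I would observe that for any $k>0$, a connected graph of excess $k$ is automatically of positive excess, so $\csg_k(z) = \csgpos_k(z)$. Combined with the identity $F_k(z)=[y^k]F(z/y,y)$ from the definition, this yields
\[
    \csg_k(z) \;=\; [y^k]\,\log\!\bigl(\sgpos(z/y,y)\bigr).
\]

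Next I would check the constant term. Among graphs whose components all have positive excess, the only one of excess $0$ is the empty graph (each component contributes at least $1$ to the excess), so $\sgpos_0(z)=1$. Writing $\sgpos(z/y,y)=1+A(z,y)$ where $A(z,y)=\sum_{j\geq 1}\sgpos_j(z)\,y^j$, the series $\log(1+A)=\sum_{q\geq 1}\frac{(-1)^{q+1}}{q}A^q$ converges as a formal power series in $y$. Extracting the coefficient of $y^k$ gives
\[
    \csg_k(z) = \sum_{q\geq 1} \frac{(-1)^{q+1}}{q}\sum_{\substack{k_1+\cdots+k_q=k\\ k_j\geq 1}} \prod_{j=1}^q \sgpos_{k_j}(z).
\]
The condition $k_j\geq 1$ forces $q\leq k$ and $k_j\leq k-q+1$, which is exactly the range stated in the theorem. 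Multiplying by $n!\,[z^n]$ yields the formula for $\csg_{n,k}$.

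There is no genuine obstacle here; the argument is a direct application of the exponential formula combined with the formal logarithm series. The only points requiring care are the identification $\csg_k=\csgpos_k$ for $k>0$ (which must use that $\sgpos$ excludes trees and unicycles precisely so that all components contribute positively to the excess) and the verification that the summation range $1\leq k_j\leq k-q+1$ is equivalent to the compositions of $k$ into exactly $q$ positive parts.
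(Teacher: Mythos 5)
Your proposal is correct and follows essentially the same route as the paper: decompose $\sgpos$ as a set of connected graphs of positive excess via the exponential formula, note that $\sgpos_0(z)=1$, take the logarithm, expand it as $\sum_{q\geq 1}\frac{(-1)^{q+1}}{q}A^q$, and extract $[y^k]$ and $[z^n]$, with the same final remark that $k_j\geq 1$ forces $q\leq k$ and $k_j\leq k-q+1$. No meaningful differences.
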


\begin{proof}
Each graph in~$\sgpos$ is a set of connected graphs with positive excess, so
\[
    \sum_{\ell \geq 0} \sgpos_{\ell}(z) y^{\ell} = e^{\sum_{k > 0} \csg_k(z) y^k}.
\]
Observe that~$\sgpos_0(z) = 1$.
Indeed, the only graph of excess~$0$ where all components have positive excess is the empty graph
(this can also be deduced by calculus from Lemma~\ref{th:sgpos}).
Taking the logarithm of the previous expression and extracting the coefficient~$[y^k]$, we obtain
\[
    \csg_k(z) = [y^k] \log \bigg( 1 + \sum_{\ell \geq 1} \sgpos_{\ell}(z) y^{\ell} \bigg),
\]
which leads to the result by expansion of the logarithm and extraction of the coefficient~$[z^n]$.
Observe that~$q \leq k$ because each~$k_j$ is at least~$1$,
and~$k_j \leq k-q+1$ for the same reason.
\end{proof}

    \section{Asymptotics of connected graphs} \label{sec:asymptotics}

In this section, we prove Theorem~\ref{th:asymptotics_csg},
deriving~$\csg_{n,k}$ up to a multiplicative factor~$(1+\bigO(n^{-d}))$,
where~$d$ is an arbitrary fixed integer.
Our strategy is to express~$\csg_{n,k}$
as a sum of finitely many non-negligible terms, 
which asymptotic expansions are extracted using a saddle-point method.
We will see that in the expression of~$\csg_{n,k}$ from Theorem~\ref{th:exact_csg},
the dominant contribution comes from~$q=1$, \ie, applying Lemma~\ref{th:sgpos},
\[
    \csg_{n,k} \sim n! [z^n] \sgpos_k(z)
    =
    \sum_{\ell = 0}^{k}
    n!
    (2(k-\ell)-1)!!
    [z^n x^{2(k-\ell)}]
    \frac{P_{\ell}(T(z) e^x,-1) e^{-V(z)}}
        {\big( 1 - T(z) \frac{e^x-1-x}{x^2/2} \big)^{k-\ell+1/2}}.
\]
In this expression, the dominant contribution will come from~$\ell=0$.
This means that a graph with~$n$ vertices, excess~$k$, and without tree or unicycle components,
is connected with high probability -- a fact already proven by \cite{ER60} and used by \cite{PW05}.
Furthermore, its loops and double edges are typically disjoint, hence forming a patchwork of excess~$0$.
We now derive the asymptotics~$D_{n,k}$ of this dominant term,
and will use it as a reference, to which the other terms will be compared.

\begin{lemma} \label{th:dominant_asymptotics}
When~$k/n$ tends toward a positive constant, we have the following asymptotics
\[
    n!
    (2k-1)!!
    [z^n x^{2k}]
    \frac{P_{0}(T(z) e^x,-1) e^{-V(z)}}
        {\big( 1 - T(z) \frac{e^x-1-x}{x^2/2} \big)^{k+1/2}}
    \sim
    \frac{n^{n+k}}{\sqrt{2 \pi n}}
    \left( \frac{e^{\lambda/2}-e^{-\lambda/2}}{\lambda^{1+k/n}} \right)^n
    \frac{(e^{\lambda}-1-\lambda) e^{-\left(1+\frac{k}{2n}\right) \lambda}}
        {\sqrt{\frac{\lambda}{2} (e^{2\lambda} - 1 - 2 \lambda e^{\lambda})}},
\]
where the right-hand side is denoted by~$D_{n,k}$,
and~$\lambda$ is the unique positive solution of~$\frac{\lambda}{2} \frac{e^{\lambda}+1}{e^{\lambda}-1} = \frac{k}{n}+1$.
In particular, introducing the value~$\zeta$ characterized by~$T(\zeta) = \frac{\lambda}{e^{\lambda}-1}$, we have
\[
    D_{n,k} = \exactbigO \Bigg(
        \frac{1}{k}
        \frac{n! (2k-1)!!}
        {\big( 1 - T(\zeta) \frac{e^{\lambda}-1-\lambda}{\lambda^2/2} \big)^{k} \zeta^n \lambda^{2k}}
    \Bigg).
\]
\end{lemma}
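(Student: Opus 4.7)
The plan is to extract the coefficient by a bivariate Cauchy integral and evaluate it by the two-dimensional saddle-point method. Write $g(x) = 2(e^x-1-x)/x^2$ for brevity. Cauchy's formula gives
\[
    [z^n x^{2k}] \frac{P_0(T(z) e^x, -1) e^{-V(z)}}{(1 - T(z) g(x))^{k+1/2}} = \frac{1}{(2\pi i)^2} \oint \oint \frac{P_0(T(z) e^x, -1) e^{-V(z)}\, dz\, dx}{z^{n+1} x^{2k+1} (1 - T(z) g(x))^{k+1/2}}.
\]
The substitution $z = T e^{-T}$ replaces $T(z)$ by $T$, with $dz/z^{n+1} = (1-T) e^{nT}/T^{n+1}\, dT$; Lemmas~\ref{th:patchworks} and~\ref{th:trees_and_unicycles} supply $P_0(Te^x,-1) = e^{-Te^x/2 - T^2 e^{2x}/4}$ and $e^{-V(T)} = \sqrt{1-T}\, e^{T/2+T^2/4}$. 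Isolating the factors of size $\exactbigO(1)$ from those driving the exponential scale recasts the integral as
\[
    \frac{1}{(2\pi i)^2} \oint \oint A(T,x)\, e^{n\Psi(T,x)}\, dT\, dx, \qquad \Psi(T,x) = T - \log T - \tfrac{2k}{n}\log x - \tfrac{k}{n}\log(1-Tg(x)).
\]

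To locate the saddle, $\partial_x \Psi = 0$ gives $T[xg'(x)+2g(x)] = 2$; the elementary identity $xg'(x)+2g(x) = 2(e^x-1)/x$ then forces $T = x/(e^x-1)$ along this locus, and substitution into $\partial_T \Psi = 0$ simplifies to exactly $\lambda(e^\lambda+1) = 2(k/n+1)(e^\lambda-1)$, so the saddle is $(T_0, x_0) = (\lambda/(e^\lambda-1), \lambda) = (T(\zeta), \lambda)$. Two identities at the saddle will be crucial below: $1 - T_0 g(\lambda) = 2k/(n\lambda)$ (by combining the closed form $1 - T_0 g(\lambda) = [(\lambda-2)e^\lambda+\lambda+2]/[\lambda(e^\lambda-1)]$ with the definition of $\lambda$), and the cancellation $1 + k/n - \lambda/2 - \lambda/(e^\lambda-1) = 0$ (also equivalent to that definition), which annihilates the would-be exponential $n$-scale. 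Taking circular contours through the saddle and invoking the standard bivariate saddle-point lemma (see \cite[Ch.~VIII]{FS09}), after verifying that $\Psi$ has a non-degenerate saddle and that the complement of a shrinking neighbourhood contributes $\bigO(n^{-1})$ relative to the main term, yields
\[
    \frac{A(T_0,\lambda)\, e^{n \Psi(T_0, \lambda)}}{2\pi n \sqrt{\det H}}\bigl(1 + \bigO(n^{-1})\bigr),
\]
where $H$ is the Hessian of $\Psi$ at $(T_0, \lambda)$.

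For the closed form, the identity $\zeta = T_0 e^{-T_0}$ gives $T_0^{-n} e^{n T_0} = \zeta^{-n}$, whence $e^{n\Psi(T_0,\lambda)} = \zeta^{-n} \lambda^{-2k} (1 - T_0 g(\lambda))^{-k}$; the $\exactbigO$ form then follows immediately because the $1/n$ of the saddle formula is $\exactbigO(1/k)$ and $A(T_0,\lambda)/\sqrt{\det H} = \exactbigO(1)$. For the precise expression of $D_{n,k}$, one applies Stirling to $n!$ and $(2k-1)!!$, uses $(1-T_0 g(\lambda))^{-k} = (n\lambda/(2k))^k$ to reshape $(2k-1)!! \cdot (1-T_0 g(\lambda))^{-k}$ into $\sqrt{2}\,(n\lambda/e)^k$, substitutes $\zeta^{-n} = ((e^\lambda-1)/\lambda)^n e^{n\lambda/(e^\lambda-1)}$, and applies the cancellation identity together with the factorisation $e^\lambda-1 = e^{\lambda/2}(e^{\lambda/2}-e^{-\lambda/2})$; this reorganises the exponential scale exactly as $(n^{n+k}/\sqrt{2\pi n})((e^{\lambda/2}-e^{-\lambda/2})/\lambda^{1+k/n})^n$, leaving $A(T_0,\lambda)/\sqrt{\det H}$ to account for the constant $(e^\lambda-1-\lambda)e^{-(1+k/(2n))\lambda}/\sqrt{(\lambda/2)(e^{2\lambda}-1-2\lambda e^\lambda)}$.

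The principal obstacle is this final verification: the explicit evaluation of the $2\times 2$ Hessian determinant — built from $g(\lambda)$, $g'(\lambda)$, $g''(\lambda)$ and $1/(1-T_0 g(\lambda))$ — and its reduction, combined with $A(T_0,\lambda)$, to the compact form $(e^\lambda-1-\lambda)\, e^{-(1+k/(2n))\lambda}/\sqrt{(\lambda/2)(e^{2\lambda}-1-2\lambda e^\lambda)}$. All the algebra concentrates in systematically eliminating $T_0$ via $T_0(e^\lambda-1) = \lambda$ and $1 - T_0 g(\lambda)$ via $1 - T_0 g(\lambda) = 2k/(n\lambda)$. A minor technical point, handled by positivity of $1 - T_0 g(\lambda)$ and entireness of $g$, is that the circular $x$-contour of radius $\lambda$ stays inside the analyticity region of $1/(1 - T_0 g(x))$ when $T$ is near $T_0$.
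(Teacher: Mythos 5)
Your proposal is correct and follows essentially the same route as the paper: a bivariate saddle-point evaluation at $(\zeta,\lambda)$ (equivalently $(T_0,\lambda)$ with $T_0=T(\zeta)=\lambda/(e^{\lambda}-1)$), with the same characterizing equation for $\lambda$, the same identity $1-T_0 g(\lambda)=\frac{2k}{n\lambda}$, and the same cancellation $1+\frac{k}{n}=\frac{\lambda}{2}+\frac{\lambda}{e^{\lambda}-1}$ producing the exponential scale. The only differences are presentational --- you make the change of variable $z=Te^{-T}$ explicit and carry a single phase $\Psi$ with large parameter $n$, whereas the paper casts it as a large-powers extraction $[z^n x^{2k}]\,A\cdot B^k$ with large parameter $k$ and writes out the Hessian entries --- and, like the paper, you defer the reduction of $A/\sqrt{\det H}$ to the closed-form constant as routine algebra.
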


\begin{proof}
Injecting the formulas for~$P_0(z,u)$ and~$V(z)$ derived in Lemmas~\ref{th:patchworks}, \ref{th:trees_and_unicycles},
the expression becomes
\[
    n! (2k-1)!! [z^n x^{2k}] A(z,x) B(z,x)^k,
\]
with
\[
    B(z,x) = \Big(1-T(z) \frac{e^x-1-x}{x^2/2}\Big)^{-1}
\]
and
\[
    A(z,x) = e^{-\frac{T(z) e^x}{2} - \frac{T(z)^2 e^{2x}}{4} + \frac{T(z)}{2} + \frac{T(z)^2}{4}} 
    \sqrt{(1-T(z)) B(z,x)}.
\]
We recognize the classic \emph{large powers} setting,
and a bivariate saddle-point method (see \eg \cite{BR99}) is applied to extract the asymptotics,
which implies the second result of the lemma:
\[
    n! (2k-1)!! [z^n x^{2k}] A(z,x) B(z,x)^k \sim
    n! (2k-1)!! \frac{A(\zeta,\lambda)}{2 \pi k \sqrt{\det(H(\zeta,\lambda))}} \frac{B(\zeta,\lambda)^k}{\zeta^n \lambda^{2k}}
\]
where~$\zeta$, $\lambda$ and the~$2 \times 2$ matrix~$(H_{i,j}(z,x))_{1 \leq i,j \leq 2}$ are characterized by the equations
\[
    \frac{\zeta \partial_{\zeta} B(\zeta,\lambda)}{B(\zeta,\lambda)} = \frac{n}{k},
    \quad
    \frac{\lambda \partial_{\lambda} B(\zeta,\lambda)}{B(\zeta,\lambda)} = 2,
    \quad
    H_{i,j}(e^{t_1}, e^{t_2}) = \partial_{t_i} \partial_{t_j} \log \left( B(e^{t_1}, e^{t_2}) \right).
\]
The first result follows by application of the Stirling formula and expansion of the expression.
The system of equation characterizing~$\zeta$ and~$\lambda$ is equivalent with
\[
    \frac{\lambda}{2} \frac{e^{\lambda} + 1}{e^{\lambda} - 1} = \frac{k}{n} + 1,
    \qquad
    T(\zeta) = \frac{\lambda}{e^{\lambda} - 1}.
\]
Since
\[
    n! (2k-!)!! \sim n^{n+k} \left( \frac{2k}{n} \right)^k e^{-n-k} \sqrt{2 \pi n} \sqrt{2},
\]
the super-exponential term in the asymptotics~$D_{n,k}$ is~$n^{n+k}$.
The exponential term is
\begin{equation} \label{eq:dnk1}
    \left( \frac{2k}{n} \right)^k e^{-n-k} \frac{B(\zeta, \lambda)^k}{\zeta^n \lambda^{2k}}
    = \left( \frac{e^{\lambda/2} - e^{-\lambda/2}}{\lambda^{1+k/n}} \right)^n.
\end{equation}
The coefficients of the symmetric matrix~$H = H(\zeta,\lambda)$ are
\[
    H_{1,1} = \frac{1}{1-T(\zeta)} \frac{n}{k} + \frac{n^2}{k^2},
    \quad
    H_{1,2} = H_{2,1} = \frac{2}{1-T(\zeta)} + \frac{2n}{k},
    \quad
    H_{2,2} = \lambda ( 1 - T(\zeta)) \frac{n}{k} + 2 \lambda.
\]
The constant and polynomial terms of the asymptotics~$D_{n,k}$ are
\begin{equation} \label{eq:dnk2}
    \sqrt{2 \pi n} \sqrt{2} \frac{A(\zeta, \lambda)}{2 \pi k \sqrt{\det(H(\zeta, \lambda))}}
    = \frac{1}{\sqrt{2 \pi n}}
    \frac{(e^{\lambda}-1-\lambda) e^{-(1 + \frac{k}{2n})\lambda}}{\sqrt{\frac{\lambda}{2} (e^{2\lambda}-1-2 \lambda e^{\lambda})}}.
\end{equation}
$D_{n,k}$ is then the product of~$n^{n+k}$ with the right-hand sides of Equations~\eqref{eq:dnk1} and~\eqref{eq:dnk2}.
\end{proof}

In the expression of~$\csg_{n,k}$ from Theorem~\ref{th:exact_csg},
the product over~$j$ has the following simple bound. %, derived in the following lemma.

\begin{lemma} \label{th:bound_prod_sgpos}
When~$k/n$ tends to a positive constant,
for any integer composition~$k_1 + \cdots + k_q = k$, we have
\[
    n! [z^n] \prod_{j=1}^q \sgpos_{k_j}(z) 
    = \frac{\prod_{j=1}^q (2k_j-1)!!}{(2k-1)!!} \bigO(k D_{n,k}),
\]
where the big~$\bigO$ is independent of~$q$.
\end{lemma}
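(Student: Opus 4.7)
The plan is to pass to the multigraph model, write the extraction as a $(q+1)$-dimensional Cauchy integral, and estimate it via multivariate saddle point at $(\zeta,\lambda)$.

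First I would use the coefficient-wise inequality $\sgpos_{k_j}(z)\le\mgpos_{k_j}(z)$ from Lemma~\ref{th:sgpos} to reduce to the multigraph side. Substituting the representation
\[
\mgpos_{k_j}(z) = (2k_j-1)!!\,[x_j^{2k_j}]\,\frac{e^{-\MV(z)}}{\bigl(1-T(z)g(x_j)\bigr)^{k_j+1/2}},\qquad g(x):=\frac{e^x-1-x}{x^2/2},
\]
with an independent dummy $x_j$ per factor and writing $[z^n]$ as a Cauchy integral expresses $n![z^n]\prod_j\mgpos_{k_j}(z)$ as $\prod_j(2k_j-1)!!$ times a $(q+1)$-fold contour integral $I_q$ on the product of circles $|z|=\zeta$, $|x_j|=\lambda$.

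I would then analyse $I_q$ by multivariate saddle point. Since the $x_j$ appear symmetrically and $g$ has nonnegative Taylor coefficients, the stationary equations decouple: each $x_j$-equation returns the one-variable saddle $x_j=\lambda$ of Lemma~\ref{th:dominant_asymptotics} (the $k_j$ cancels in that equation), and the $z$-equation, using $\sum_j k_j=k$, reduces to the $q=1$ saddle condition and gives $z=\zeta$. The Hessian in logarithmic coordinates is an arrow matrix --- its $(x_i,x_j)$ block vanishes for $i\ne j$ --- and with $H^{(1)}$ denoting the $2\times 2$ Hessian of Lemma~\ref{th:dominant_asymptotics}, a direct computation yields $\det H = k\cdot\det H^{(1)}\cdot(H^{(1)}_{2,2})^{q-1}\prod_j k_j$, so the Gaussian factor equals the $q=1$ one multiplied by $\sqrt{k/\prod_j k_j}\cdot(2\pi/H^{(1)}_{2,2})^{(q-1)/2}$.

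Combining the integrand value at the saddle with this Gaussian factor and comparing to $n![z^n]\mgpos_k(z)$, which has the same asymptotic shape as $D_{n,k}$ up to a universal constant, yields
\[
n![z^n]\prod_{j=1}^q\mgpos_{k_j}(z)\le C\cdot D_{n,k}\cdot\frac{\prod_j(2k_j-1)!!}{(2k-1)!!}\cdot \Xi^{\,q-1}\sqrt{k\big/\textstyle\prod_j k_j},
\]
for universal constants $C,\Xi$ depending only on the limit of $k/n$. Since $\sqrt{k/\prod_j k_j}\le\sqrt k\le k$, it remains to absorb $\Xi^{q-1}$. Writing the combinatorial prefactor as $\binom{k}{k_1,\ldots,k_q}/\binom{2k}{2k_1,\ldots,2k_q}$, a Stirling/entropy estimate uniform over all compositions with $q\ge 2$ parts each $\ge 1$ shows it is at most $\bigO(k^{-(q-1)})$, which dominates $\Xi^{q-1}$ as soon as $k$ exceeds a constant depending only on $\Xi$; finitely many small-$k$ cases are absorbed into the implicit constant, and $q=1$ follows directly from Lemma~\ref{th:dominant_asymptotics}.

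The main obstacle will be the \emph{uniformity in $q$}, since $q$ can be as large as $k$. The arrow form of the Hessian is essential here: it lets $\det H$ be an explicit product and each $x_j$-coordinate be rescaled individually, so the Gaussian tails in the $(q+1)$-dimensional integral do not accumulate $q$-dependent constants. A secondary difficulty is that, for large $q$, the factor $e^{-q\MV(z)}$ in the integrand slightly shifts the saddle; one must verify that this shift is small enough to keep the above estimate valid, and it is the hypothesis $k_j\ge 1$ in every part --- through the resulting super-polynomial decay of the multinomial ratio --- that ultimately absorbs $\Xi^{q-1}$.
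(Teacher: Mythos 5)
There is a genuine gap, and it sits exactly where you locate the difficulty. Write $\Pi=\prod_{j=1}^q(2k_j-1)!!/(2k-1)!!$. The statement to be proved is $n![z^n]\prod_j\sgpos_{k_j}(z)\le \Pi\cdot\bigO(kD_{n,k})$ with the implied constant independent of $q$, and your saddle-point evaluation delivers $\Pi\cdot\Xi^{q-1}\sqrt{k/\prod_jk_j}\cdot\bigO(D_{n,k})$. Since $\Pi$ occurs on both sides, it cancels: what you must show is that $\Xi^{q-1}\sqrt{k/\prod_jk_j}=\bigO(k)$ uniformly for $q$ up to $k$, and the smallness of $\Pi$ (your $\bigO(k^{-(q-1)})$ bound on the multinomial ratio, which is correct) is of no help for that. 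Invoking it as you do either begs the question or proves only $n![z^n]\prod_j\sgpos_{k_j}(z)=\bigO(kD_{n,k})$ \emph{without} the factor $\Pi$; that weaker statement is useless for Lemma~\ref{th:remove_negligible_terms}, where one sums over all $\binom{k-1}{q-1}$ compositions and the factor $\Pi$, through the sums $S_{q,d,k}$, is precisely what makes the total small. A second, independent problem is that you perform an asymptotic saddle-point \emph{evaluation} of a $(q+1)$-dimensional integral with $q$ allowed to grow like $k\sim n$; the uniformity of such an expansion in growing dimension, and the fact that the term $e^{-q\MV(z)}$ then perturbs the $z$-saddle macroscopically (it is comparable to $n\log z$ when $q=\exactbigO(k)$), are serious issues that you acknowledge but do not resolve.

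The paper's proof is far lighter and sidesteps both problems: it never evaluates a multidimensional integral asymptotically, but only applies to each factor the elementary bound $[z^nx^{2m}]F(z,x)\le F(\zeta,\lambda)/(\zeta^n\lambda^{2m})$, valid for any series with nonnegative coefficients, at the \emph{fixed} point $(\zeta,\lambda)$ of Lemma~\ref{th:dominant_asymptotics}. Multiplying these bounds produces the factor $\Pi$ exactly, and comparing with the $\exactbigO$ estimate of $D_{n,k}$ stated in Lemma~\ref{th:dominant_asymptotics} supplies the $\bigO(kD_{n,k})$. (Even there one must check that the per-factor constant $e^{-\MV(\zeta)}\bigl(1-T(\zeta)\tfrac{e^{\lambda}-1-\lambda}{\lambda^2/2}\bigr)^{-1/2}$, raised to the power $q$, does not spoil uniformity --- this is the honest, one-number version of your $\Xi^{q-1}$ worry --- but it concerns a single explicit constant, not a growing-dimension Laplace method.) To repair your route you would need either to prove that your $\Xi\le1$, or to reserve the exact saddle-point evaluation for bounded $q$ and fall back on the crude coefficient bound for large $q$.
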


\begin{proof}
According to Lemma~\ref{th:sgpos}, we have
\[
    n! [z^n] \prod_{j=1}^q \sgpos_{k_j}(z) \leq n! [z^n] \prod_{j=1}^q \mgpos_{k_j}(z)
    = \prod_{j=1}^q n! (2k_j-1)!! [z^n x^{2k_j}] \frac{e^{-\MV(z)}}{\big( 1 - T(z) \frac{e^x-1-x}{x^2/2} \big)^{k_j+1/2}}.
\]
Applying a classic bound (see \eg~\cite[Section~VIII.2]{FS09}), we obtain for all~$j$
\[
    [z^n x^{2k_j}] \frac{e^{-\MV(z)}}{\big( 1 - T(z) \frac{e^x-1-x}{x^2/2} \big)^{k_j+1/2}}
    \leq \frac{e^{-\MV(\zeta)}}
        {\big( 1 - T(\zeta) \frac{e^{\lambda}-1-\lambda}{\lambda^2/2} \big)^{k_j+1/2} \zeta^n \lambda^{2k_j}}.
%    \leq \frac{1}
%        {\big( 1 - T(\zeta) \frac{e^{\lambda}-1-\lambda}{\lambda^2/2} \big)^{k_j+1/2} \zeta^n \lambda^{2k_j}}.
\]
Taking the product over~$j$ and using the facts~$k_1 + \cdots + k_q = k$ and~$e^{-\MV(\zeta)} < 1$ leads to
\[
    n! [z^n] \prod_{j=1}^q \sgpos_{k_j}(z)
    \leq
    \frac{n! \prod_{j=1}^q (2k_j-1)!!}
        {\big( 1 - T(\zeta) \frac{e^{\lambda}-1-\lambda}{\lambda^2/2} \big)^{k+1/2} \zeta^n \lambda^{2k}}.
\]
The result follows, as a consequence of the bound derived in Lemma~\ref{th:dominant_asymptotics}.
%\[
%    n! [z^n] \prod_{j=1}^q \sgpos_{k_j}(z) 
%    \leq \frac{\prod_{j=1}^q (2k_j-1)!!}{(2k-1)!!} \bigO(k D_{n,k}).
%\]
\end{proof}

We now identify, in the expression of~$\csg_{n,k}$ from Theorem~\ref{th:exact_csg},
some negligible terms.

\begin{lemma} \label{th:remove_negligible_terms}
For any fixed~$d$ (resp.\ fixed~$d$ and~$q$), the following two terms are~$\bigO(k^{-d} D_{n,k})$
\[
    \sum_{q=d+5}^k \frac{(-1)^{q-1}}{q}
    \sum_{\substack{k_1 + \cdots + k_q = k\\ \forall j,\ 1 \leq k_j \leq k-q+1}}
    \prod_{j=1}^q
    n! [z^n] \sgpos_{k_j}(z),
    \qquad
    \sum_{j \geq 0}
    \sum_{\substack{k_1 + \cdots + k_q = k\\ \forall j,\ 1 \leq k_j \leq k-d-4-j}}
    \prod_{j=1}^q
    n! [z^n] \sgpos_{k_j}(z).
\]
\end{lemma}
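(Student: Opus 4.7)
My plan is to apply Lemma~\ref{th:bound_prod_sgpos} uniformly to each summand, and then bound the resulting sums of $\prod_j (2k_j-1)!!/(2k-1)!!$ by Stirling-type estimates. Using the identity $(2m-1)!! = (2m)!/(2^m m!)$, I would first rewrite this ratio as $\binom{k}{k_1,\ldots,k_q}/\binom{2k}{2k_1,\ldots,2k_q}$, which by Stirling's formula is of order $\prod_j k_j^{k_j}/k^k$ up to polynomial corrections in $k$ and $q$.

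For any composition with largest part $k-L$ (so that $L = k - \max_j k_j \geq q-1$), I would derive the bound
\[
\frac{\prod_j k_j^{k_j}}{k^k} \leq \Big(1 - \frac{L}{k}\Big)^{k-L}\cdot \frac{L^L}{k^L} \leq \Big(\frac{L}{ek}\Big)^L,
\]
and count at most $q\binom{L-1}{q-2}$ compositions of $k$ with $q$ parts having largest part equal to $k-L$. Combined with the factor $kD_{n,k}$ from Lemma~\ref{th:bound_prod_sgpos}, each composition contributes $\bigO(kD_{n,k}(L/(ek))^L)$, up to polynomial factors.

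For the first sum, every composition with $q \geq d+5$ parts satisfies $L \geq q-1 \geq d+4$, so summing $q\binom{L-1}{q-2}(L/(ek))^L$ over $L$ (a geometrically convergent series dominated by its smallest index $L = q-1$), then over $q \geq d+5$, produces a bound of order $k^{-(d+4)}$. Multiplying by the prefactor $kD_{n,k}$ yields $\bigO(k^{-d-3}D_{n,k}) \subseteq \bigO(k^{-d}D_{n,k})$. For the second sum, $q$ is fixed and the constraint $k_j \leq k-d-4-j$ already enforces $L \geq d+4$, with the outer index producing only additional decay; the same counting gives $\bigO(k^{-d}D_{n,k})$.

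The hard part will be controlling the uniformity of the Stirling approximation as $q$ grows with $k$: I will need to verify that the polynomial-in-$k$ corrections accumulated across the $q$ factors do not swamp the $(L/(ek))^L$ decay. I would address this either by keeping exact bounds on the multinomial ratio (avoiding Stirling entirely, using for example $\binom{2k}{2k_1,\ldots,2k_q} \geq \binom{k}{k_1,\ldots,k_q}^2$ together with a refinement on the dominant part), or by tracking the polynomial factors $\sqrt{k/\prod k_j}$ explicitly and majorizing them by a fixed power of $k$ that is absorbed into the geometric decay.
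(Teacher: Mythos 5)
Your overall strategy is the same as the paper's: apply Lemma~\ref{th:bound_prod_sgpos} to reduce everything to the purely combinatorial sums $\sum \prod_j (2k_j-1)!!/(2k-1)!!$, then control those by Stirling estimates organised around the observation that the summand is largest when one part is close to $k$. The paper carries this out in Appendix~\ref{sec:double_factorial_asymptotics} via the sequence $S_{q,d,k}$, an induction on $q$ showing $S_{q,0,k}\le 3q$, and a split of the range of $L=k-\max_j k_j$ into three zones (small $L$, $L$ near $k/2$ handled by symmetry in Lemma~\ref{th:sum_q_2}, $L$ near $k$ handled by Lemma~\ref{th:exp_bound_S}). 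Your multinomial rewriting $\prod_j(2k_j-1)!!/(2k-1)!!=\binom{k}{k_1,\ldots,k_q}/\binom{2k}{2k_1,\ldots,2k_q}$ and the target order $k^{-(d+4)}$ (comfortably enough against the $\bigO(kD_{n,k})$ prefactor) are both fine.

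However, there is a concrete error in your key estimate. The inequality
\[
\Big(1-\frac{L}{k}\Big)^{k-L}\,\frac{L^L}{k^L}\ \le\ \Big(\frac{L}{ek}\Big)^L
\]
is false: it is equivalent to $\log(1-x)\le -x/(1-x)$ with $x=L/k$, whereas in fact $\log(1-x)\ge -x/(1-x)$ for $x\in[0,1)$; already $k=2$, $L=1$ gives $1/4 > 1/(2e)$. The correct statement is $\big(1-\frac{L}{k}\big)^{k-L}\frac{L^L}{k^L}=e^{-kH(L/k)}$ with $H$ the binary entropy, i.e.\ the $e^{-L}$ gain is only $e^{-L+L^2/k+\cdots}$. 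This matters because your subsequent step multiplies by the composition count $q\binom{L-1}{q-2}$, which you would presumably majorize by $q\,2^{L-1}$ for uniformity in $q$; with the false bound the product $2^L(L/(ek))^L$ is geometric in $L$ over the whole range, but with the honest bound $2^L(L/k)^L$ (after discarding $(1-L/k)^{k-L}\le 1$, as the paper does) the terms are of order $1$ near $L=k/2$ and do not decay, so the claim that the series over $L$ is ``geometrically convergent and dominated by its smallest index'' is unjustified as stated. You can repair this either by keeping the full entropy factor $e^{-kH(L/k)}$ and checking $L\log 2<kH(L/k)$ on the relevant range (which itself fails for $L$ close to $k$, forcing a separate treatment of that zone, exactly as in the paper's Lemma~\ref{th:exp_bound_S}), or by following the paper's route: restrict to $L\le k/2$ by symmetry, write $(L/k)^L\le (L/k)^{d}2^{-(L-d)}$, and replace the crude count $\binom{L-1}{q-2}\cdot L^L$ of the small parts by the recursive bound $S_{q-1,\cdot,L}\le 3(q-1)$. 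As you anticipated, this uniformity in $q$ is precisely where the work lies; your plan correctly identifies the difficulty but does not yet resolve it.
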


\begin{proof}
According to Lemma~\ref{th:bound_prod_sgpos}, it is sufficient to prove that the sequence
\[
    S_{q,d,k} = \sum_{\substack{k_1 + \cdots + k_q = k\\ \forall j,\ 0 \leq k_j \leq k-d}}
    \frac{\prod_{j=1}^q (2k_j-1)!!}{(2k-1)!!}
\]
satisfies, for any fixed~$d$ (resp.\ when~$d$ and~$q$ are fixed),
\[
    \sum_{q = d+5}^k \frac{1}{q} S_{q,q-1,k} = \bigO(k^{-d-1})
    \qquad \text{and} \qquad
    \sum_{j \geq 0} S_{q,d+4+j,k} = \bigO(k^{-d-1}).
\]
The proof is available in Appendix~\ref{sec:double_factorial_asymptotics}
The two main ingredients are that the argument of the sum defining~$S_{q,d,k}$ is maximal
when one of the~$k_j$ is large (then the others remain small),
and that $S_{q,0,k} \leq 3 q$ for all~$q \leq k$ (proof by recurrence).
\end{proof}

Using the previous lemma, we remove the negligible terms from~$\csg_{n,k}$ and simplify its expression.

\begin{lemma} \label{th:bound_q_r}
There exist computable polynomials~$R_{q,r}$ such that, when~$k/n$ has a positive limit,
\begin{equation} \label{eq:asymptotics_csg_bounded_q_r}
    \csg_{n,k} =
    \sum_{q=1}^{d+4}
    (-1)^{q-1}
    \sum_{r=q-1}^{d+3}
    n! [z^n]
    \sgpos_{k-r}(z)
    \frac{R_{q,r}(T(z))}{(1-T(z))^{3r}}
    \big(1+\bigO(k^{-d}) \big).    
\end{equation}
\end{lemma}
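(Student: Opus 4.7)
The plan is to start from the exact expression of Theorem~\ref{th:exact_csg} and successively discard negligible contributions via Lemma~\ref{th:remove_negligible_terms}, then regroup the survivors into the claimed form. First, I would apply the first estimate of Lemma~\ref{th:remove_negligible_terms} to truncate the outer summation over~$q$ at $q=d+4$, incurring only an $\bigO(k^{-d} D_{n,k})$ error. For each remaining $q \in \{1,\ldots,d+4\}$, I would then split the inner sum over compositions $k_1 + \cdots + k_q = k$ according to whether some $k_j$ exceeds $k-d-3$. By the second estimate of Lemma~\ref{th:remove_negligible_terms} (applied telescopically through the offset parameter), the contribution of compositions in which every part is at most $k-d-4$ is again $\bigO(k^{-d} D_{n,k})$, so only compositions containing a ``large'' part survive.

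Since $k \to \infty$ (because $k/n$ has a positive limit) while $q \leq d+4$ stays bounded, for all sufficiently large~$n$ any composition of $k$ into $q$ positive parts contains \emph{at most one} part of size $\geq k-d-3$. The product $\prod_{j=1}^q \sgpos_{k_j}$ is symmetric in $(k_1,\ldots,k_q)$, so the compositions with the unique large part placed in each of the $q$ positions contribute equally. The resulting factor $q$ cancels the factor $1/q$ coming from Theorem~\ref{th:exact_csg}, leaving a prefactor $(-1)^{q-1}$ and an inner sum over compositions in which, say, $k_1 \geq k-d-3$ is the distinguished large part.

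Setting $r := k_2 + \cdots + k_q = k - k_1$, the constraint becomes $r \in \{q-1,\ldots,d+3\}$, which is precisely the range appearing in~\eqref{eq:asymptotics_csg_bounded_q_r}. By Lemma~\ref{th:poly_sgpos}, the product over the small parts factors as
\[
    \prod_{j=2}^q \sgpos_{k_j}(z) = \frac{\prod_{j=2}^q Q_{k_j}(T(z))}{(1-T(z))^{3r}},
\]
and summing over the finitely many compositions of $r$ into $q-1$ positive parts yields a polynomial $R_{q,r}$ in $T(z)$ divided by $(1-T(z))^{3r}$. Pulling $\sgpos_{k-r}(z)$ out as an explicit factor and taking $n!\,[z^n]$ of the result produces exactly the right-hand side of~\eqref{eq:asymptotics_csg_bounded_q_r}.

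The main technical obstacle is the bookkeeping of the error terms. Each of the two truncations above contributes an error bounded by $\bigO(k^{-d} D_{n,k})$, and one has to check that after dividing by the surviving dominant-order terms (which, by Lemma~\ref{th:bound_prod_sgpos}, are themselves of order $D_{n,k}$ up to polynomial factors in $k$), the combined relative error is indeed $\bigO(k^{-d})$. A secondary point is verifying that the ``unique large part'' assumption fails only for finitely many exceptional $n$, which is absorbed into the $\bigO(k^{-d})$ remainder since $D_{n,k}$ grows super-exponentially in $n$.
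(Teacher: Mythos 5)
Your proposal is correct and follows essentially the same route as the paper's proof: truncate the sum from Theorem~\ref{th:exact_csg} via the two estimates of Lemma~\ref{th:remove_negligible_terms}, observe that for large~$k$ at most one part of the composition can be large so that the symmetry factor~$q$ cancels the~$1/q$, set~$r = k - \max_j(k_j)$, and apply Lemma~\ref{th:poly_sgpos} to package the small parts into~$R_{q,r}(T(z))/(1-T(z))^{3r}$. Your closing remarks on the error bookkeeping and the finitely many exceptional~$n$ are sound and, if anything, slightly more careful than the paper's own write-up.
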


\begin{proof}
The previous lemma proves that in the expression of~$\csg_{n,k}$ from Theorem~\ref{th:exact_csg},
we need only consider the terms corresponding to~$q \leq d+4$, and~$k-(d+4) \leq \max_j(k_j) \leq k$.
Since~$k_1+ \cdots + k_q = k$, when~$k$ is large enough and~$d$ is fixed,
there is at most one~$k_j$ between~$k-d$ and~$k$. 
Up to a symmetry of order~$q$, we can thus assume~$k_q = \max_j (k_j)$,
and introduce~$r = k - k_q$
\[
    \csg_{n,k} =
    \sum_{q=1}^{d+4}
    (-1)^{q-1}
    \sum_{r=q-1}^{d+3}
    n! [z^n]
    \sgpos_{k-r}(z)
    \sum_{\substack{k_1 + \cdots + k_{q-1} = r \\ \forall j,\ k_j \geq 1}}
    \prod_{j=1}^{q-1} \sgpos_{k_j}(z)
    \big(1+\bigO(k^{-d}) \big).
\]
According to Lemma~\ref{th:poly_sgpos}, there exist computable polynomials~$(Q_k)_{k \geq 1}$ such that
\[
    \sum_{\substack{k_1 + \cdots + k_{q-1} = r \\ \forall j,\ k_j \geq 1}}
    \prod_{j=1}^{q-1} \sgpos_{k_j}(z)
    =
    \sum_{\substack{k_1 + \cdots + k_{q-1} = r \\ \forall j,\ k_j \geq 1}}
    \prod_{j=1}^{q-1}
    \frac{Q_{k_j}(T(z))}{(1-T(z))^{3k_j}}
    =
    \frac{\sum_{\substack{k_1 + \cdots + k_{q-1} = r \\ \forall j,\ k_j \geq 1}}
    \prod_{j=1}^{q-1} Q_{k_j}(T(z))}{(1-T(z))^{3r}},
\]
and the numerator is the polynomial~$R_{q,r}$ evaluated at~$T(z)$.
\end{proof}

The next lemma proves that the terms corresponding to patchworks with a large excess are negligible.
The difficulty here is that we can only manipulate the generating functions of patchworks of finite excess.

\begin{lemma} \label{th:bound_ell}
When~$k/n$ has a positive limit and~$q$, $r$ are fixed, then
\[
    n! [z^n]
    \sgpos_{k-r}(z)
    \frac{R_{q,r}(T(z))}{(1-T(z))^{3r}}
\]
is equal to
\[
    \sum_{\ell = 0}^{d-1}
    n! (2(k-r-\ell)-1)!! 
    [z^n x^{2(k-r-\ell)}]
    \frac{P_{\ell}(T(z) e^x,-1) e^{-V(z)}}
        {\big( 1 - T(z) \frac{e^x-1-x}{x^2/2} \big)^{k-r-\ell+1/2}}
    \frac{R_{q,r}(T(z))}{(1-T(z))^{3r}}
    \big(1+\bigO(k^{-d})\big).
\]
\end{lemma}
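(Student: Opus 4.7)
The plan is to start from Lemma~\ref{th:sgpos}, which already expresses $\sgpos_{k-r}(z)$ as a finite sum over $\ell = 0, 1, \ldots, k-r$ of terms whose shape matches exactly the summands on the right-hand side of the present lemma. Multiplying by the factor $R_{q,r}(T(z))/(1-T(z))^{3r}$ (analytic and bounded at the saddle point, since $T(\zeta) < 1$) and extracting $n! [z^n]$ then reduces the lemma to showing that the contribution of the tail $\ell \geq d$ is $\bigO(k^{-d} D_{n,k})$. By Lemma~\ref{th:dominant_asymptotics}, the $\ell = 0$ summand alone is of order $D_{n,k-r}$, which for fixed $r$ is $\exactbigO(D_{n,k})$; this sets the reference scale against which the relative error $k^{-d}$ is measured.

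For each fixed $\ell$, the $\ell$-th summand has the same product form $A_\ell(z,x)\,B(z,x)^{k-r-\ell+1/2}$ as the integrand of Lemma~\ref{th:dominant_asymptotics}, with $A_\ell$ analytic at the shifted saddle point $(\zeta_\ell, \lambda_\ell)$ determined by $\frac{\lambda}{2}\frac{e^\lambda+1}{e^\lambda-1} = 1 + \frac{k-r-\ell}{n}$. Applying the same bivariate saddle-point method yields an asymptotic $\sim c_{\ell,r,q} \cdot D_{n,k-r-\ell}$, where the constant depends analytically on $P_\ell$, $R_{q,r}$, and $V$ evaluated at the (barely shifted) saddle. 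The ratio $D_{n,k-r-\ell}/D_{n,k}$, computed from Lemma~\ref{th:dominant_asymptotics}, is dominated by the double-factorial factor $(2(k-r-\ell)-1)!!/(2k-1)!! = \exactbigO(k^{-r-\ell})$; the remaining exponential and polynomial factors vary by bounded constants once $r, \ell$ are bounded. Each fixed-$\ell$ summand therefore contributes $\bigO(k^{-\ell})\, D_{n,k}$, with implied constant depending only on $\ell, r, q$.

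The main obstacle is upgrading this fixed-$\ell$ estimate to a uniform-in-$\ell$ bound, so that the tail from $\ell = d$ up to $\ell = k-r$ sums geometrically to $\bigO(k^{-d} D_{n,k})$. Combining the coarse coefficient bound $|[x^{2m}] G(x)| \le \lambda^{-2m} \max_{|x|=\lambda} |G(x)|$ (and the analogous one for $[z^n]$ on the circle $|z| = \zeta$) with the factorial prefactor $(2(k-r-\ell)-1)!!$ already produces a factor $(\lambda^2/(2k))^\ell$ relative to the $\ell = 0$ term, which is $\bigO(k^{-\ell})$ since $\lambda$ is bounded. What remains is to control $|P_\ell(T(z)e^x, -1)|$ uniformly on the saddle contour. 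For this, I would use the product structure of Lemma~\ref{th:patchworks}, $P_\ell = P_0 \cdot P_\ell^\star$, together with the fact that a reduced patchwork of excess $\ell$ is obtained by inserting double edges into a cubic multigraph of excess at most $\ell$ (finite in number by Appendix~\ref{sec:cubic_multigraphs}). Exploiting the weighting $1/(2^{m} m!\, n!)$ of the multigraph convention and the alternating signs from $u=-1$, one derives an exponential bound $|P_\ell^\star(T(z)e^x, -1)| \le C^\ell$ on the saddle contour. Inserting this into the estimate above yields $|\mathrm{term}_\ell| \le (C/k)^\ell\, D_{n,k}$, whose tail from $\ell = d$ is $\bigO(k^{-d} D_{n,k})$, completing the truncation.
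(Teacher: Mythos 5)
Your route is genuinely different from the paper's, and it runs into exactly the obstacle the paper flags just before this lemma (``we can only manipulate the generating functions of patchworks of finite excess''). You start from the exact identity of Lemma~\ref{th:sgpos}, whose sum runs over $\ell$ up to $k-r$, and try to bound the tail $\ell\ge d$ term by term; everything then hinges on a uniform-in-$\ell$ estimate of $P_\ell(T(z)e^x,-1)$ on the saddle contour, valid for $\ell$ as large as $k$. You assert the bound $|P_\ell^{\star}(T(z)e^x,-1)|\le C^{\ell}$ and say that ``one derives'' it from the weighting and the alternating signs, but no derivation is given, and this is the hard part, not a routine step. The alternating signs at $u=-1$ cannot help an upper bound on an absolute value; the natural move is $|P_\ell^{\star}(z,-1)|\le P_\ell^{\star}(|z|,1)$, and one must then control the growth of the positive polynomials $P_\ell^{\star}(\cdot,1)$. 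This requires a real structural analysis: the kernels of reduced patchworks are multigraphs of minimum degree at least $3$ and excess at most $\ell$, and the total weight of such multigraphs (in the $1/(2^m m!\,n!)$ convention) grows like $\ell^{\ell}c^{\ell}$, i.e.\ super-exponentially. One must exploit the fact that patchwork kernels are very special (every edge lies in a loop or a multiple edge) to beat this; moreover, even a bound of the form $\ell^{\alpha\ell}c^{\ell}$ with $\alpha>0$ would ruin the geometric summation of the tail at $\ell$ comparable to $k$, where the gain $(2(k-r-\ell)-1)!!/(2k-1)!!=\exactbigO\big((2k)^{-\ell}\big)$ is only $k^{-\ell}$ up to exponential factors. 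So the step ``whose tail from $\ell=d$ is $\bigO(k^{-d}D_{n,k})$'' is not justified as written.

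The paper circumvents this entirely: it never expands the full sum over $\ell$. It introduces the truncated inclusion--exclusion operator $\IE_{<d}$, which by construction only involves the finitely many fixed polynomials $P_0,\dots,P_{d-1}$, identifies $\IE_{<d}(\mgpos_{n,k,<d})$ with $\sgpos_{n,k}$ exactly, and bounds the error $\IE_{<d}(\mgpos_{n,k,\ge d})$ \emph{combinatorially}, by over-counting multigraphs carrying a distinguished patchwork of excess exactly $d$ (hence only $P_d(\cdot,2)$ and $P_\ell(\cdot,1)$ for $\ell<d$ appear, all with positive coefficients and fixed index). This reduces the error term to the same fixed-index saddle-point estimate as the main terms. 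If you want to keep your direct tail-summation approach, you would need to actually prove a sufficiently strong uniform bound on $P_\ell^{\star}(x,1)$ for $x$ in a fixed disk and $\ell\to\infty$ -- a self-contained but nontrivial lemma about patchworks that the paper deliberately avoids. Your first two paragraphs (the fixed-$\ell$ estimates and the identification of the reference scale $D_{n,k}$) are fine and consistent with the paper's saddle-point machinery; the gap is concentrated in the final truncation step.
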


\begin{proof}
We only present the proof of the equality
\[
    n! [z^n] \sgpos_{k}(z) = 
    \sum_{\ell = 0}^{d-1}
    n! (2(k-\ell)-1)!! 
    [z^n x^{2(k-\ell)}]
    \frac{P_{\ell}(T(z) e^x,-1) e^{-V(z)}}
        {\big( 1 - T(z) \frac{e^x-1-x}{x^2/2} \big)^{k-\ell+1/2}}
    \big(1+\bigO(k^{-d})\big).
\]
This corresponds to the case~$q=1$ and~$r=0$ of the lemma,
the general proof being identical.
Given a finite family~$\mF$ of multigraphs,
let~$\IE_{<d}(\mF)$ denote the bounded inclusion-exclusion operator
\[
    \IE_{<d}(\mF) =
    \sum_{G \in \mF\ }
    \sum_{P \subset \LD(G),\ k(P) < d}
    (-1)^{|P|}.
\]
Let~$\mgpos_{n,k}$ denote the set of multigraphs with~$n$ vertices, excess~$k$,
without tree or unicycle component.
Its subset~$\mgpos_{n,k,<d}$ (resp.\ $\mgpos_{n,k,\geq d}$)
corresponds to multigraphs~$G$ with maximal patchwork~$\LD(G)$ 
of excess less than~$d$ (resp.\ at least~$d$).
Given the decomposition
$
    \mgpos_{n,k} = \mgpos_{n,k,<d} \uplus \mgpos_{n,k,\geq d},
$
we have
\begin{equation} \label{eq:IE_linear}
    \IE_{<d}(\mgpos_{n,k}) = \IE_{<d}(\mgpos_{n,k,<d}) + \IE_{<d}(\mgpos_{n,k,\geq d}).
\end{equation}
Working as in the proof of Lemma~\ref{th:sgpos}, we obtain
\[
    \IE_{<d}(\mgpos_{n,k}) =
    \sum_{\ell = 0}^{d-1} 
    n! (2(k-\ell)-1)!! 
    [z^n x^{2(k-\ell)}]
    \frac{P_{\ell}(T(z) e^x,-1) e^{-V(z)}}
        {\big( 1 - T(z) \frac{e^x-1-x}{x^2/2} \big)^{k-\ell+1/2}}.
\]
Since~$(2(k-\ell)-1)!! = \exactbigO(k^{-\ell} (2k-1)!!)$, 
applying the same saddle-point method as in Lemma~\ref{th:dominant_asymptotics},
the~$\ell$th term of the sum is a~$\exactbigO(k^{-\ell} D_{n,k})$.
By inclusion-exclusion $\IE_{<d}(\mgpos_{n,k,<d}) = \sgpos_{n,k}$ so,
injecting those results in Equation~\eqref{eq:IE_linear},
\[
    \sgpos_{n,k} = 
    \sum_{\ell = 0}^{d-1} 
    n! (2(k-\ell)-1)!! 
    [z^n x^{2(k-\ell)}]
    \frac{P_{\ell}(T(z) e^x,-1) e^{-V(z)}}
        {\big( 1 - T(z) \frac{e^x-1-x}{x^2/2} \big)^{k-\ell+1/2}}
    - \IE_{<d}(\mgpos_{n,k,\geq d}).
\]
We now bound~$|\IE_{<d}(\mgpos_{n,k,\geq d})|$.
Any multigraph from~$\mgpos_{n,k,\geq d}$ contains, as a subgraph, a patchwork of excess~$d$.
Thus, $|\mgpos_{n,k,\geq d}|$ is bounded by the number of multigraphs from~$\mgpos_{n,k}$
where a patchwork of excess~$d$ is distinguished.
If, in any such multigraph, we mark another patchwork of excess less than~$d$
-- which might well intersect the patchwork previously distinguished -- we obtain the bound
\[
    |\IE_{<d}(\mgpos_{n,k,\geq d})|
    \leq
    \sum_{\ell=0}^{d-1}
    n! (2(k-d-\ell)-1)!! [z^n x^{2(k-d)}]
    \frac{P_{d}(T(z) e^x, 2) P_{\ell}(T(z) e^x,1)}
        {\big( 1 - T(z) \frac{e^x-1-x}{x^2/2} \big)^{k-d-\ell+1/2}},
\]
where the second argument of~$P_d$ is a $2$, because each loop and double edge 
of the distinguished patchwork can be either marked or left unmarked.
By the same saddle-point argument, this is a~$\bigO(k^{-d} D_{n,k})$.
\end{proof}

Combining Lemmas~\ref{th:bound_q_r} and~\ref{th:bound_ell},
$\csg_{n,k}$ is expressed as a sum of finitely many terms (since~$d$ is fixed)
\begin{equation} \label{eq:expansion_csg}
    \csg_{n,k} =
    \sum_{q=1}^{d+4}
    (-1)^{q-1}
    \sum_{r=q-1}^{d+3}
    \sum_{\ell=0}^{d-1}
    n! (2(k-r-\ell)-1)!! 
    [z^n x^{2k}]
    A_{q,r,\ell}(z,x) B(z,x)^k
    \big(1+\bigO(k^{-d})\big),
\end{equation}
where
\[
    B(z,x) = \Big(1-T(z) \frac{e^x-1-x}{x^2/2}\Big)^{-1}
\]
and
\[
    A_{q,r,\ell}(z,x) = \frac{x^{2(r+\ell)} P_{\ell}(T(z) e^x,-1) e^{-V(z)}}
        {\big(1-T(z) \frac{e^x-1-x}{x^2/2}\big)^{-r-\ell+1/2}} \frac{R_{q,r}(T(z))}{(1-T(z))^{3r}}.
\]
Since $(2(k-r-\ell)-1)!! = \exactbigO(k^{-r-\ell} (2k-1)!!)$, applying the same saddle-point method as in Lemma~\ref{th:dominant_asymptotics}, we obtain that the summand corresponding to~$q$, $r$, $\ell$ is a~$\exactbigO(k^{-r-\ell} D_{n,k})$.
Hence, $D_{n,k}$ is the dominant term in the asymptotics of~$\csg_{n,k}$.
We can be more precise in our estimation of each summand.
Its coefficient extraction is expressed as a Cauchy integral
on a torus of radii~$(\zeta, \lambda)$ (from Lemma~\ref{th:dominant_asymptotics}),
\[
    [z^n x^{2k}] A_{q,r,\ell}(z,x) B(z,x)^k =
    \frac{1}{(2\pi)^2}
    \int_{\theta=-\pi}^{\pi}
    \int_{\varphi=-\pi}^{\pi}
    A_{q,r,\ell}(\zeta e^{i \theta}, \lambda e^{i \varphi})
    \frac{B(\zeta e^{i \theta}, \lambda e^{i \varphi})^k}{\zeta^n e^{n i \theta} \lambda e^{2k i \varphi}}
    d\theta d\varphi,
\]
and its asymptotic expansion follows, by application of \cite[Theorem 5.1.2]{PW13}
\[
    n! (2(k-r-\ell)-1)!! 
    [z^n x^{2k}]
    A_{q,r,\ell}(z,x) B(z,x)^k
    =
    k^{-r-\ell} D_{n,k}
    \left( b_0 + \cdots + b_{d-1} n^{-d-1} + \bigO(n^{-d}) \right),
\]
where the~$(b_{\ell})$ are computable constants, and the factorials have been replaced by their asymptotic expansions.
Injecting those expansions in Equation~\eqref{eq:expansion_csg} concludes the proof of Theorem~\ref{th:asymptotics_csg}.

\bibliographystyle{abbrvnat}
\bibliography{/home/elie/research/articles/bibliography/biblio}

    \section{Appendix}

    \subsection{Multigraphs with minimum degree at least~$3$} \label{sec:cubic_multigraphs}

For completeness, we give the proof of the following Lemma, which goes back at least to~\cite{W80}.
It is applied in Lemma~\ref{th:patchworks}.

\begin{lemma}
The number of multigraphs with minimum degree at least~$3$ and excess~$k$ is finite.
\end{lemma}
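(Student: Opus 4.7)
The plan is to use the handshake lemma to bound the size of any such multigraph, and then to observe that only finitely many labelled multigraphs fit under that bound. Concretely, let $G$ be a multigraph with minimum degree at least $3$, $n$ vertices, and excess $k$, so $m(G) = n+k$ edges. Summing degrees (with loops counted twice, as usual) gives
\[
    3 n \leq \sum_{v \in V(G)} \deg(v) = 2 m(G) = 2(n+k),
\]
hence $n \leq 2k$ and $m(G) = n+k \leq 3k$.

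With both $n$ and $m(G)$ bounded by a linear function of the fixed excess $k$, only finitely many multigraphs remain. In the model adopted in the paper, vertices carry labels in $\{1, \ldots, n\}$ and each edge is a triple $(v, w, e)$ with $v, w \in V(G)$ and edge label $e \in \{1, \ldots, m\}$. For each admissible pair $(n,m)$, the number of such triples is at most $n^2 m$, so the number of possible edge sets of size $m$ is at most $(n^2 m)^m$, which is finite. Summing over the finitely many admissible pairs $(n,m)$ with $n \leq 2k$ and $m \leq 3k$ gives the result.

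There is essentially no obstacle beyond being careful with the handshake lemma in the multigraph setting, specifically the convention that a loop at $v$ contributes $2$ to $\deg(v)$ so that $\sum_v \deg(v) = 2 m(G)$ still holds. Once that is in place, the argument reduces to the degree inequality above together with the trivial finiteness of the set of labelled combinatorial objects of bounded size. It is worth remarking that the bound $n \leq 2k$ is tight, achieved when every vertex has degree exactly $3$, which is consistent with the role of cubic multigraphs in Lemma~\ref{th:patchworks}.
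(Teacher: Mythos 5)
Your proof is correct and follows the same route as the paper: the handshake inequality $3n \leq 2m$ yields $n \leq 2k$ and $m \leq 3k$, and finiteness follows from the bounded size. The extra care you take with the loop convention and the explicit count of labelled edge sets is fine but not needed beyond what the paper states.
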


\begin{proof}
Let us consider a multigraph~$G$ with minimum degree at least~$3$, $n$ vertices, $m$ edges, and excess~$k = m-n$.
Since the sum of the degrees is equal to twice the number of edges, we have
\[
    3 n \leq \sum_{v \in V(G)} \deg(v) = 2m,
\]
which implies
\[
    n \leq 2m - 2n = 2k
    \quad \text{and} \quad
    m = k + n \leq 3k.
\]
The number of multigraphs with at most~$2k$ vertices and~$3k$ edges is finite,
which concludes the proof.
\end{proof}

    \subsection{Properties of the sequence~$S_{q,d,k}$} \label{sec:double_factorial_asymptotics}

The goal of this section is to prove Lemma~\ref{th:Sqdk},
which is required for the proof of Lemma~\ref{th:remove_negligible_terms}.
We did not try to derive the tighter possible bounds.
Instead, their quality has been sacrificed in order to simplify the proofs.

The proofs are guided by the observation that
the argument of the sum defining~$S_{q,d,k}$ is maximal
when one of the~$k_j$'s is large (then the others remain small).
In my opinion, they are elementary, but too complicated compared to the simplicity of the result.
I am working on a more elegant version, starting with the integral representation
\[
    (2k-1)!! = \frac{1}{\sqrt{2\pi}} \int_{-\infty}^{+\infty} t^{2k} e^{-t^2/2} dt.
\]

\begin{lemma} \label{th:sum_q_2}
When~$d$ is fixed and~$k$ tends to infinity, we have
\[
    \sum_{r = d}^{k-d} \frac{(2(k-r)-1)!! (2r-1)!!}{(2k-1)!!}
    = \bigO(k^{-d}).
\]
\end{lemma}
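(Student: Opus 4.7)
The plan is to reduce the sum to a cleaner form via symmetry, then split it into two regimes: a small-$r$ range dominated by polynomial decay and a large-$r$ range where Stirling gives exponential decay. I would start by rewriting the summand as
\[
    a_r := \frac{(2(k-r)-1)!!\,(2r-1)!!}{(2k-1)!!} = \frac{\binom{k}{r}}{\binom{2k}{2r}},
\]
which makes the symmetry $a_r = a_{k-r}$ transparent, so it suffices to show $\sum_{r=d}^{\lfloor k/2\rfloor} a_r = \bigO(k^{-d})$. A direct computation yields the ratio $a_{r+1}/a_r = (2r+1)/(2k-2r-1)$, telescoping into $a_r = \prod_{j=0}^{r-1}(2j+1)/(2k-2j-1)$. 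For $r \leq k/2$ each denominator is at least $k$, giving the cleaner upper bound $a_r \leq (2r-1)!!/k^r =: c_r$.

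I would then split the remaining sum at $r = \lfloor k/4 \rfloor$. On $[d, \lfloor k/4\rfloor]$, the ratio $c_{r+1}/c_r = (2r+1)/k$ stays below $1/2 + \smallo(1)$, so $c_r$ decays geometrically from $c_d = (2d-1)!!/k^d$, and the partial sum is $\bigO(c_d) = \bigO(k^{-d})$ since $d$ is fixed. On $(\lfloor k/4\rfloor, \lfloor k/2\rfloor]$, Stirling's formula gives $(2r-1)!! = \exactbigO((2r/e)^r)$, hence $c_r = \bigO((2r/(ek))^r)$. Because $2r/(ek) \leq 1/e$ on this range, each term is $\bigO(e^{-r}) \leq \bigO(e^{-k/4})$; summing over $\bigO(k)$ such terms still yields an exponentially small contribution, which is $\smallo(k^{-d})$ for every fixed $d$.

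Adding the two partial sums and invoking the $r \leftrightarrow k-r$ symmetry finishes the bound. Nothing is conceptually delicate; the only point requiring care is the location of the split, chosen so that the geometric ratio $(2r+1)/k$ is bounded strictly below $1$ on the lower piece while $2r/(ek)$ is bounded strictly below $1$ on the upper piece, which lets both constants come out cleanly. The author's suggested Gaussian integral identity would offer a slicker path via the beta-function representation $a_r = \frac{\Gamma(k+1)}{\sqrt{\pi}\,\Gamma(k+1/2)}\,B(r+\tfrac12, k-r+\tfrac12)$ followed by Laplace's method, but the elementary two-regime estimate above already delivers the claimed bound.
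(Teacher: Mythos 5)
Your proof is correct, and while it shares the paper's opening move --- exploiting the symmetry $r \leftrightarrow k-r$ to reduce to $r \leq k/2$ --- the core estimate is genuinely different. The paper applies Stirling to all three double factorials at once, reducing the summand to $\exactbigO\big((1-r/k)^{k-r}(r/k)^r\big)$, and then dispatches the whole range $d \leq r \leq k/2$ with the single elementary inequality $(1-r/k)^{k-r}(r/k)^r \leq (r/k)^d\, 2^{-(r-d)}$, which immediately yields $k^{-d}\sum_{r \geq d} r^d 2^{-(r-d)} = \bigO(k^{-d})$. You instead exploit the exact multiplicative structure via $a_{r+1}/a_r = (2r+1)/(2k-2r-1)$ (equivalently $a_r = \binom{k}{r}/\binom{2k}{2r}$, a nice identity the paper does not record), obtaining the Stirling-free bound $a_r \leq (2r-1)!!/k^r$ on the dominant range; the price is that this ratio approaches (and exceeds) $1$ as $r \to k/2$, forcing your second split at $k/4$ and a separate Stirling estimate for the tail $r \in (k/4, k/2]$. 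The paper's route is shorter because one inequality covers everything; yours buys an exact, Stirling-free handle on the summand where it matters most (near $r = d$, where the $\bigO(k^{-d})$ constant is actually determined), at the cost of an extra regime. Both are complete; the only point worth stating explicitly in your write-up is that the constant in $(2r-1)!! = \bigO((2r/e)^r)$ is uniform in $r$, which is immediate from two-sided Stirling bounds and is needed to sum the $\bigO(k)$ tail terms.
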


\begin{proof}
Stirling's bounds
\[
    \sqrt{2 \pi} \sqrt{n} e^{-n} n^n \leq n! \leq e \sqrt{n} e^{-n} n^n
\]
imply
\[
    (2n-1)!! = \exactbigO(e^{-n} (2n)^n),
%    \frac{2 \sqrt{\pi}}{e} e^{-n} (2n)^n 
%    \leq (2n-1)!! 
%    \leq \frac{e}{\sqrt{\pi}} e^{-n} (2n)^n,
\]
and hence
\[
    \frac{(2(k-r)-1)!! (2r-1)!!}{(2k-1)!!}  = \exactbigO \left(\frac{(k-r)^{k-r} r^r}{k^k} \right).
%    \frac{4 \pi^{3/2}}{e^3} \frac{(k-r)^{k-r} r^r}{k^k} 
%    \leq \frac{(2(k-r)-1)!! (2r-1)!!}{(2k-1)!!} 
%    \leq \frac{e^3}{2 \pi^{3/2}} \frac{(k-r)^{k-r} r^r}{k^k}.
\]
Using the symmetry, we cut the sum in the expression of the lemma in two halves
\[
    \sum_{r = d}^{k-d} \frac{(2(k-r)-1)!! (2r-1)!!}{(2k-1)!!}
    \leq
    2 \sum_{r = d}^{k/2} \frac{(2(k-r)-1)!! (2r-1)!!}{(2k-1)!!}.
\]
Injecting the previous relation, this implies
\[
    \sum_{r=d}^{k-d} \frac{(2(k-r)-1)!! (2r-1)!!}{(2k-1)!!} 
    = \bigO \Bigg( \sum_{r=d}^{k/2} \frac{(k-r)^{k-r} r^r}{k^k} \Bigg)
    = \bigO \Bigg( \sum_{r=d}^{k/2} \left( 1 - \frac{r}{k} \right)^{k-r} \left( \frac{r}{k} \right)^r \Bigg).
\]
Since $1-r/k \leq 1$ and $r/k \leq 1/2$, we have
\[
    \sum_{r=d}^{k/2} \left( 1 - \frac{r}{k} \right)^{k-r} \left( \frac{r}{k} \right)^r
    \leq \sum_{r=d}^{k/2} \left( \frac{r}{k} \right)^d \left( \frac{r}{k} \right)^{r-d}
    \leq k^{-d} \sum_{r \geq d} \frac{r^d}{2^{r-d}}
    = \bigO(k^{-d}).
\]
\end{proof}

\begin{lemma} \label{th:const_bound_S}
The sequence
\[
    S_{q,d,k} = 
    \sum_{\substack{k_1 + \cdots + k_q = k\\ \forall j,\ 0 \leq k_j \leq k-d}}
    \frac{\prod_{j=1}^q (2k_j-1)!!}{(2k-1)!!}
\]
satisfies $S_{q,d,k} \leq 3 q$ for all large enough~$k$, and any~$q \leq k$ and $d$.
\end{lemma}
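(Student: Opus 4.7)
The plan is to reduce to the case $d=0$, establish a recurrence, and induct on $q$ using Lemma~\ref{th:sum_q_2}.

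\emph{Reduction and recurrence.} For $d \geq 0$, the constraint $k_j \leq k-d$ only shrinks the set of compositions, so $S_{q,d,k} \leq S_{q,0,k}$; it therefore suffices to prove $S_{q,0,k} \leq 3q$ for $1 \leq q \leq k$ and $k \geq k_0$. Write $a_k = (2k-1)!!$, $T_q(k) = a_k S_{q,0,k}$, and $r_{j,k} = a_j a_{k-j}/a_k$. Splitting the defining sum of $T_q(k)$ by the last part yields $T_q(k) = \sum_{j=0}^k a_j T_{q-1}(k-j)$; dividing by $a_k$ and isolating the terms $j=0$ and $j=k$ gives
\[
    S_{q,0,k} = S_{q-1,0,k} + 1 + \sum_{j=1}^{k-1} r_{j,k}\, S_{q-1,0,k-j},
\]
and telescoping from $S_{1,0,k}=1$ produces the identity
\[
    S_{q,0,k} = q + \sum_{p=1}^{q-1}\sum_{j=1}^{k-1} r_{j,k}\, S_{p,0,k-j},
\]
where the leading $q$ is the contribution of the $q$ compositions having a single $k_j=k$.

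\emph{Induction on $q$.} The base case $q=1$ is immediate. For the inductive step, assuming $S_{p,0,m} \leq 3p$ whenever $p < q$ and $m \geq \max(p,k_0)$, I would split the double sum into a \emph{regular} region ($k-j \geq p$, where the IH applies) and a \emph{boundary} region ($k-j < p$). Applying Lemma~\ref{th:sum_q_2} (which gives $\sum_{j=1}^{k-1} r_{j,k} = \bigO(1/k)$), the regular region contributes at most
\[
    3\sum_{p=1}^{q-1} p \cdot \bigO(1/k) = \bigO(q^2/k) = \bigO(q),
\]
using $q \leq k$. For the boundary region, the change of variable $m = k-j$ together with the symmetry $r_{j,k}=r_{k-j,k}$ reduces the contribution to $\sum_{p=1}^{q-1}\sum_{m=1}^{p-1} r_{m,k}\,S_{p,0,m}$. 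I would bound $S_{p,0,m} \leq \binom{m+p-1}{m}$ (each summand is at most $1$, by the sub-additivity $a_j a_{k-j} \leq a_k$ that follows from Lemma~\ref{th:sum_q_2} applied at $d=0$) and use the Stirling estimate $r_{m,k} \sim \binom{2m}{m}/(4^m k^m)$; for $p \leq k$ and fixed $m$ this makes each term of order $1/(m!\sqrt{m})$, so the $m$-sum converges and the total is $\bigO(q)$.

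The hard part will be the quantitative control needed to combine the regular and boundary estimates into at most $2q$, so as to produce the tight bound $q + 2q = 3q$ rather than some larger $Cq$. The leading constant in $\sum_{j=1}^{k-1} r_{j,k} \sim 1/k$ is sharp, and the boundary contribution, though summable over $m$, accumulates linearly in $q$, so both estimates must be carried out carefully and $k_0$ must be chosen large enough to absorb transient terms at small $k$. As the author confesses, this makes the proof technically heavier than the result warrants; a conceptually cleaner alternative would exploit the Gaussian moment representation $(2k-1)!! = (2\pi)^{-1/2}\int_{-\infty}^{+\infty} t^{2k} e^{-t^2/2}\,dt$ to recast $S_{q,0,k}$ as a ratio of expectations of complete homogeneous symmetric polynomials evaluated at iid standard Gaussian squares.
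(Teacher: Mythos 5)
Your outline is in the same general spirit as the paper's argument (reduce to $d=0$, induct on $q$, invoke Lemma~\ref{th:sum_q_2}, and treat separately the compositions dominated by one large part), but it stops short of a proof, and the estimates you actually state do not close to the constant $3$. The paper splits the sum according to the \emph{maximal} part $k_q$, paying a factor $q$ once via symmetry; this yields $S_{q,0,k}\le q\,\bigl(1+\tfrac{q}{2k-1}+\bigO(k^{-2})\cdot 3k+\smallo(1)\bigr)\le q\,(2+\smallo(1))$, so the bracket is bounded by $3$ with room to spare. You instead split by the \emph{last} part and telescope, obtaining the exact identity $S_{q,0,k}=q+\sum_{p<q}\sum_{j}r_{j,k}S_{p,0,k-j}$; here the double sum genuinely contributes $\exactbigO(q)$ when $q=\exactbigO(k)$, so everything hinges on showing it is at most $2q$. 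With the bounds you give this fails: $\sum_{j=1}^{k-1}r_{j,k}=\tfrac{2}{2k-1}+\bigO(k^{-2})\sim 1/k$ (the constant is $1$, driven by the endpoints $j=1$ and $j=k-1$), so the ``regular'' region gives $3\sum_{p<q}p\cdot\tfrac1k(1+\smallo(1))\le\tfrac{3q}{2}(1+\smallo(1))$, and your boundary bound $S_{p,0,m}\le\binom{m+p-1}{m}$ combined with $r_{m,k}\sim(2m-1)!!\,(2k)^{-m}$ gives at best $q\sum_{m\ge1}\binom{2m}{m}(m+1)^{-1}4^{-m}=q$; the total is then $q+\tfrac{3q}{2}+q=\tfrac{7q}{2}>3q$. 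To recover $3q$ you would need the finer facts that for $p\ge2$ the regular $j$-sum excludes its largest term $r_{k-1,k}$ (halving it), and that $S_{p,0,m}$ is smaller than the number of compositions by roughly a factor $(2m-1)!!$. You explicitly defer exactly this ``quantitative control,'' so the key step is missing rather than merely sketched.

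Two further points. First, your asymptotic $r_{m,k}\sim\binom{2m}{m}4^{-m}k^{-m}$ is off by a factor $m!$: the correct statement is $r_{m,k}\sim(2m-1)!!\,(2k)^{-m}=\binom{2m}{m}m!\,4^{-m}k^{-m}$. This does not affect convergence of the $m$-sum, but in an argument where the target constant is exactly $3$ such factors cannot be waved away. Second, the inequality $a_ja_{k-j}\le a_k$ does not ``follow from Lemma~\ref{th:sum_q_2} applied at $d=0$'' (an $\bigO(1)$ bound on a sum says nothing about individual terms being at most $1$); it is true, but needs the direct computation $a_ia_j/a_{i+j}=\binom{i+j}{i}/\binom{2i+2j}{2i}\le 1$. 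You should also say how $S_{p,0,m}$ is controlled in the range $p\le m<k_0$ where the induction hypothesis has not yet been established. If the exact constant $3$ is not essential for your purposes (and indeed the paper only ever uses $S_{q,d,k}=\bigO(q)$ uniformly), your route does deliver $S_{q,0,k}\le Cq$ for some universal $C$; but as a proof of the stated lemma it has a genuine gap, and the paper's maximal-part decomposition is the cheaper way to get the constant.
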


\begin{proof}
%For~$q > k$, we have~$S_{q,0,k} = 0$, so the inequality is satisfied.
%We now assume~$q \leq k$.
Since~$S_{q,d,k} \leq S_{q,0,k}$, we focus on the case~$d=0$.
Up to a symmetry of order~$q$, we can assume~$k_q = \max_{j}(k_j)$
\[
    S_{q,0,k} \leq
    q
    \sum_{k_q = k/q}^{k}
    \frac{(2 k_q-1)!!}{(2k-1)!!}
    \sum_{\substack{k_1 + \cdots + k_{q-1} = k-k_q\\ \forall j,\ 0 \leq k_j \leq k_q}}
    \prod_{j=1}^q (2k_j-1)!!.
\]
We introduce~$r = k - k_q$, and replace the second sum with $(2r-1)!! S_{q-1,2r-k,r}$
\begin{equation} \label{eq:ineq_S}
    S_{q,0,k} \leq
    q
    \sum_{r = 0}^{k(1-1/q)}
    \frac{(2(k-r)-1)!! (2r-1)!!}{(2k-1)!!}
    S_{q-1,2r-k,r}.
\end{equation}
The biggest value reached by~$r$ is then at most~$k-1$.
Developping the first and last few terms, we obtain
%\begin{align} \label{eq:ineq_S}
\begin{equation} 
    S_{q,0,k} \leq
    q
    \Big( %\left( %
    S_{q-1,k,0} 
    %\vphantom{\Big)}.
    %&
    + \frac{S_{q-1,k-1,1}}{2k-1}
    %+ \frac{3 S_{q-1,k-2,2}}{(2k-1)(2k-3)}
    + \sum_{r = 2}^{k-2}
    \frac{(2(k-r)-1)!! (2r-1)!!}{(2k-1)!!}
    S_{q-1,2r-k,r}
    %\\&
    %\vphantom{\Big(}
    %+ \frac{3 S_{q-1,2,k-2}}{(2k-1)(2k-3)}
    + \frac{S_{q-1,k-2,k-1}}{2k-1}
    \Big).% \nonumber
\end{equation}
%\end{align}
We have
\[
    S_{q-1,k,0} \leq 1,
    \quad
    S_{q-1,k-1,1} \leq q,
    \quad
    S_{q-1,k-2,k-1} \leq \frac{2^k}{(2k-3)!!} = \smallo(1).
\]
We inject those relations in the previous inequality %Inequality~\ref{eq:ineq_S}
\begin{align*}
    S_{q,0,k} 
    &\leq
    q \left(
    1 + \frac{q}{2k-1} 
    %+ \frac{3 q(q+5)}{6(2k-1)(2k-3)}
    + \sum_{r = 2}^{k-2}
    \frac{(2(k-r)-1)!! (2r-1)!!}{(2k-1)!!}
    S_{q-1,k-r,r}
    + \smallo(1)
    \right)
    \\ &\leq
    q \left( 2
    + \sum_{r = 2}^{k-2}
    \frac{(2(k-r)-1)!! (2r-1)!!}{(2k-1)!!}
    S_{q-1,2r-k,r}
    + \smallo(1)
    \right).
\end{align*}
Finally, we prove by recurrence~$S_{q,0,k} \leq 3q$ for~$k$ large enough.
For~$q=1$, we have~$S_{1,0,k} = 1$, which initializes the recurrence.
Let us assume that the recurrence holds for~$q-1$,
then~$S_{q-1,2r-k,r} \leq S_{q-1,0,r} \leq 3 q \leq 3 k$, and
\[
    S_{q,0,k} \leq q \left( 2 
    + \sum_{r = 2}^{k-2}
    \frac{(2(k-r)-1)!! (2r-1)!!}{(2k-1)!!}
    3 k
    + \smallo(1)
    \right).
\]
We apply Lemma~\ref{th:sum_q_2} to bound the sum
\[
    S_{q,0,k} \leq q(2 + \bigO(k^{-2}) 3 k + \smallo(1)),
\]
which is not greater than~$3q$ for~$k$ large enough.
\end{proof}

\begin{lemma} \label{th:exp_bound_S}
For any fixed~$d$, $k$ large enough and~$q \leq k$, we have~$S_{q,k-d,k} \leq 2^{-k}$.
\end{lemma}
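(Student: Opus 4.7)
The plan is to exploit the fact that in $S_{q,k-d,k}$ the constraint $0 \leq k_j \leq k - (k-d) = d$ forces every part of the composition to be bounded by the fixed constant $d$. This single observation controls both the size of each factorial factor and the total number of terms in the sum, so I expect rather crude bounds to suffice.

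First, since $(2m-1)!!$ is nondecreasing in $m \geq 0$ (using the convention $(-1)!! = 1$), every factor satisfies $(2k_j - 1)!! \leq (2d-1)!!$, and hence
\[
    \prod_{j=1}^{q} (2k_j - 1)!! \leq \bigl( (2d-1)!! \bigr)^q.
\]
Second, the number of ordered tuples $(k_1, \ldots, k_q) \in \{0, 1, \ldots, d\}^q$ is at most $(d+1)^q$, which upper bounds the number of compositions appearing in the sum. Writing $M = (d+1)(2d-1)!!$ and using $q \leq k$ (with $M \geq 1$ as soon as $d \geq 1$), this yields
\[
    S_{q, k-d, k} \leq \frac{M^{q}}{(2k-1)!!} \leq \frac{M^{k}}{(2k-1)!!}.
\]

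Finally, I would invoke the Stirling-type estimate $(2k-1)!! = \Theta\bigl((2k/e)^k\bigr)$ already used in Lemma~\ref{th:sum_q_2}: since $M$ is a fixed constant while $2k/e \to \infty$, we have $(2k-1)!! \geq (2M)^k$ for $k$ large enough, which gives $S_{q, k-d, k} \leq 2^{-k}$, uniformly in $q \leq k$. The degenerate case $d = 0$ is trivial, since then every $k_j$ must equal $0$ and the constraint $k_1 + \cdots + k_q = k > 0$ cannot be satisfied, so the sum is empty.

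The argument encounters no real obstacle; the constraint $k_j \leq d$ is so restrictive that even these coarse bounds on the product and on the number of compositions leave an exponential margin when compared to the superexponential growth of $(2k-1)!!$. The only mild subtlety is to treat $d = 0$ separately and to check that $q \leq k$ allows us to absorb $M^q$ into $M^k$.
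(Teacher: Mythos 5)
Your proof is correct and follows essentially the same strategy as the paper's: bound each summand by a quantity exponential in $k$ (the paper does this via Stirling applied to each $(2k_j-1)!!$ with $k_j \leq d$, you via monotonicity $(2k_j-1)!! \leq (2d-1)!!$), multiply by the at most $(d+1)^q \leq (d+1)^k$ compositions, and conclude from the superexponential growth of $(2k-1)!!$. The only differences are cosmetic, plus your welcome explicit treatment of the degenerate case $d=0$.
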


\begin{proof}
The expression of~$S_{q,k-d,k}$ is
\[
    S_{q,k-d,k} =
    \sum_{\substack{k_1 + \cdots + k_q = k\\ \forall j,\ 0 \leq k_j \leq d}}
    \frac{\prod_{j=1}^q (2k_j-1)!!}{(2k-1)!!}.
\]
Applying Stirling's formula, we bound the double factorials
\[
    C_1 (2k)^k e^{-k} \leq (2k-1)!! \leq C_2 (2k)^k e^{-k}
\]
for some constant positive values~$C_1$, $C_2$. This implies, when~$k_1 + \cdots + k_q = k$,
\[
    \frac{\prod_{j=1}^q (2k_j-1)!!}{(2k-1)!!}
    \leq
    \frac{\prod_{j=1}^q C_2 (2k_j)^{k_j} e^{-k_j}}{C_1 (2k)^k e^{-k}}
    \leq
    \frac{C_2^q}{C_1}
    \left( \frac{d}{k} \right)^k
    \leq
    \frac{1}{C_1}
    \left( \frac{C_2 d}{k} \right)^k.
\]
The cardinality of the set~$\{(k_1, \ldots, k_q) \in [0, d]^q\ |\ k_1 + \cdots + k_q = k\}$
is at most~$d^q$, which is not greater than~$d^k$, so
\[
    S_{q,k-d,k} =
    \sum_{\substack{k_1 + \cdots + k_q = k\\ \forall j,\ 0 \leq k_j \leq d}}
    \frac{\prod_{j=1}^q (2k_j-1)!!}{(2k-1)!!}
    \leq
    \frac{1}{C_1}
    \left( \frac{C_2 d^2}{k} \right)^k.
\]
The right hand-side is smaller than~$2^{-k}$ when~$d$ is fixed and~$k$ is large enough.
\end{proof}

\begin{lemma} \label{th:bound_S}
For any fixed~$d$, all~$k$ large enough, and~$q \leq k$, we have~$S_{q,d,k} = \bigO(k^{-d+2})$ uniformly with respect to~$q$.
\end{lemma}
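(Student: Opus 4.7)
The plan is to split $S_{q,d,k}$ according to whether every $k_j$ is strictly below $d$ or at least one of them reaches the range $[d, k-d]$. In the first regime, the contribution is at most $S_{q, k-d+1, k}$, which Lemma~\ref{th:exp_bound_S} bounds by $2^{-k}$ and is therefore absorbed into the error.

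For the second regime, I would single out the largest entry $k_q = \max_j k_j$, at the cost of an overall symmetry factor of $q$ (which only overcounts when several $k_j$'s are tied). Writing $r = k - k_q$, the constraints $k_q \geq d$ and $k_q \leq k-d$ translate into $r \in [d, k-d]$. The remaining $k_1, \ldots, k_{q-1}$ sum to $r$, each bounded by $k-r = k_q$; for $r \leq k/2$ this bound is vacuous and the inner sum reduces to $S_{q-1, 0, r}$, while for $r > k/2$ it becomes $S_{q-1, 2r-k, r}$. Either way, Lemma~\ref{th:const_bound_S} yields the uniform upper bound $3(q-1)$, giving
\[
    S_{q, d, k} \leq 2^{-k} + 3\, q (q-1) \sum_{r=d}^{k-d} \frac{(2(k-r)-1)!!\, (2r-1)!!}{(2k-1)!!}.
\]
Lemma~\ref{th:sum_q_2} identifies the remaining sum as $\bigO(k^{-d})$, and since $q \leq k$ this produces the desired $S_{q, d, k} = \bigO(q^2 k^{-d}) = \bigO(k^{-d+2})$ uniformly in $q$.

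The delicate point will be invoking Lemma~\ref{th:const_bound_S} in the range where $q - 1 > r$, that is when $q$ is close to $k$ and $r$ is close to $d$; in this boundary regime the $3(q-1)$ bound is not directly available. The main obstacle will therefore be handling this corner of the sum, either by a direct bound on $S_{q-1, 0, r}$ for small $r$ or by exploiting the sharp decay of the prefactor $(2(k-r)-1)!!(2r-1)!!/(2k-1)!!$ near $r = d$ to absorb the loss into the overall $\bigO(k^{-d+2})$ estimate.
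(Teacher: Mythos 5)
Your proof follows essentially the same route as the paper's: single out the maximal part $k_q$ with a symmetry factor $q$, bound the inner sum $S_{q-1,\cdot,r}$ by Lemma~\ref{th:const_bound_S}, control the main range $r\in[d,k-d]$ with Lemma~\ref{th:sum_q_2}, and absorb the regime where every $k_j$ is below $d$ via Lemma~\ref{th:exp_bound_S}, yielding the same $\bigO(q^2 k^{-d})=\bigO(k^{-d+2})$ bound. The corner case you flag (applying Lemma~\ref{th:const_bound_S} to $S_{q-1,\cdot,r}$ when $q-1>r$ or $r$ is small) is a genuine subtlety, but the paper's own proof glosses over it in exactly the same way by simply writing $S_{q-1,2r-k,r}\leq 3k$.
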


\begin{proof}
We start as in the proof of Lemma~\ref{th:const_bound_S}.
Up to a symmetry of order~$q$, we can assume~$k_q = max_{j}(k_j)$,
and introduce~$r = k-k_q$. We obtain an inequality similar to~\eqref{eq:ineq_S}
\[
    S_{q,d,k} \leq
    q
    \sum_{r = d}^{k(1-1/q)}
    \frac{(2(k-r)-1)!! (2r-1)!!}{(2k-1)!!}
    S_{q-1,2r-k,r}.
\]
We bound~$q$ by~$k$ and cut the sum into two parts
\[
    S_{q,d,k} \leq
    k
    \sum_{r = d}^{k-d}
    \frac{(2(k-r)-1)!! (2r-1)!!}{(2k-1)!!}
    S_{q-1,2r-k,r}
    +
    k
    \sum_{r = k-d+1}^{k-1}
    \frac{(2(k-r)-1)!! (2r-1)!!}{(2k-1)!!}
    S_{q-1,2r-k,r}.
\]
The first sum is bounded by application of Lemmas~\ref{th:const_bound_S} and~\ref{th:sum_q_2}
\[
    k
    \sum_{r = d}^{k-d}
    \frac{(2(k-r)-1)!! (2r-1)!!}{(2k-1)!!}
    S_{q-1,2r-k,r}
    \leq
    k
    \sum_{r = d}^{k-d}
    \frac{(2(k-r)-1)!! (2r-1)!!}{(2k-1)!!}
    3 k
    =
    \bigO(k^{-d+2}).
\]
In the second sum, the fraction of factorials is bounded by~$1$
\[
    k
    \sum_{r = k-d+1}^{k-1}
    \frac{(2(k-r)-1)!! (2r-1)!!}{(2k-1)!!}
    S_{q-1,2r-k,r}
    \leq
    k
    \sum_{r = k-d+1}^{k-1}
    S_{q-1,2r-k,r}.
\]
The sequence~$S_{q,d,k}$ is decreasing with respect to~$d$,
so when~$r$ is greater than~$k-d$, we have
\[
    S_{q-1,2r-k,r} = S_{q-1,r-(k-r),r} \leq S_{q-1, r-(d-1), r},
\]
which is bounded by~$2^{-r}$, according to Lemma~\ref{th:exp_bound_S}.
This implies
\[
    k
    \sum_{r = k-d+1}^{k-1}
    S_{q-1,2r-k,r}
    \leq
    k \sum_{r = k-d+1}^{k-1} 2^{-r}
    \leq \frac{k}{2^{k-d}},
\]
which is negligible compared to~$k^{-d+2}$.
\end{proof}

\begin{lemma} \label{th:Sqdk}
For any fixed~$d$ (resp.\ when~$d$ and~$q$ are fixed), we have
\[
    \sum_{q = d+5}^k \frac{1}{q} S_{q,q-1,k} = \bigO(k^{-d-1})
    \qquad \text{and} \qquad
    \sum_{j \geq 0} S_{q,d+4+j,k} = \bigO(k^{-d-1}).
\]
\end{lemma}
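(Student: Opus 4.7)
My plan is to deduce both bounds from Lemma~\ref{th:bound_S} together with two elementary observations about $S_{q,d,k}$. First, monotonicity: tightening the per-part upper bound $k_j \leq k-d$ by increasing $d$ only removes compositions from the sum while preserving the non-negative summands, so $S_{q,d+1,k} \leq S_{q,d,k}$. Second, a hard ceiling on $d$: a composition of $k$ into $q$ non-negative parts each at most $k-d$ exists only when $q(k-d) \geq k$, hence $S_{q,d,k} = 0$ as soon as $d > k(1 - 1/q)$.

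For the first sum, $q \geq d+5$ implies $q-1 \geq d+4$, so monotonicity gives $S_{q,q-1,k} \leq S_{q,d+4,k}$. The value $d+4$ is a fixed constant (since $d$ is fixed), so Lemma~\ref{th:bound_S} applies at that value and yields $S_{q,d+4,k} = \bigO(k^{-(d+4)+2}) = \bigO(k^{-d-2})$ uniformly in $q$. Summing,
\[
    \sum_{q=d+5}^k \frac{S_{q,q-1,k}}{q} \leq \bigO(k^{-d-2}) \sum_{q=d+5}^k \frac{1}{q} = \bigO(k^{-d-2} \log k) = \bigO(k^{-d-1}).
\]

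For the second sum, with $d$ and $q$ both fixed, the same pair of facts gives, uniformly in $j$,
\[
    S_{q,d+4+j,k} \leq S_{q,d+4,k} = \bigO(k^{-d-2}).
\]
The ceiling observation confines the nonzero terms to $d+4+j \leq k(1-1/q)$, that is $j \leq k(q-1)/q - d - 4 = \bigO(k)$ for fixed $q$. Therefore
\[
    \sum_{j \geq 0} S_{q,d+4+j,k} \leq \bigO(k) \cdot \bigO(k^{-d-2}) = \bigO(k^{-d-1}).
\]

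The key point is that all nontrivial analytic work has already been carried out inside Lemma~\ref{th:bound_S}; the present statement is then a packaging exercise. The one subtlety I want to flag, and which I would label the main obstacle in a naive attempt, is that one must resist the temptation to invoke Lemma~\ref{th:bound_S} at the varying argument $q-1$ (in the first sum) or $d+4+j$ (in the second), since its implicit constant depends on that argument and degrades badly as the argument grows. Evaluating instead at the single fixed constant $d+4$ and paying elsewhere -- logarithmically in $q$ in the first case, linearly in the count of $j$ in the second -- is exactly what makes the argument go through.
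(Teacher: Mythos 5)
Your proof is correct and follows essentially the same route as the paper's: monotonicity of $S_{q,d,k}$ in $d$ to reduce everything to the single fixed argument $d+4$, then Lemma~\ref{th:bound_S} (with its uniformity in $q$) applied once, then a count of the number of nonvanishing terms. The only cosmetic differences are that you keep the $1/q$ factor to get a $\log k$ instead of the paper's cruder factor $k$ in the first sum, and you state the vanishing threshold as $d+4+j > k(1-1/q)$ rather than $j \geq k$; neither changes the argument.
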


\begin{proof}
Since the sequence~$S_{q,d,k}$ is deacreasing with respect to~$d$, we have, when~$q$ is at least~$d+5$,
\[
    S_{q,q-1,k} \leq S_{q, d+4, k},
\]
which is a~$\bigO(k^{-d-2})$ according to Lemma~\ref{th:bound_S}. Hence,
\[
    \sum_{q = d+5}^k \frac{1}{q} S_{q,q-1,k} \leq k S_{q,d+4,k} = \bigO(k^{-d-1}).
\]
In the second sum of the lemma, the summand vanishes when~$j \geq k$, so
\[
    \sum_{j \geq 0} S_{q,d+4+j,k} \leq k S_{q,d+4,k} = \bigO(k^{-d-1}).
\]
\end{proof}

\end{document}